\theoremstyle{plain}
\newtheorem{theo}{Theorem}[section]
\newtheorem{lem}[theo]{Lemma}
\newtheorem*{lem*}{Lemma}
\newtheorem{coro}[theo]{Corollary}
\theoremstyle{remark}
\theoremstyle{definition}
\newcommand{\ga}{\gamma}
\newcommand{\Si}{\Sigma} 
\newcommand{\te}{\theta}
\newcommand{\al}{\alpha} 
\newcommand{\De}{\Delta} 
\newcommand{\si}{\sigma} 
\newcommand{\om}{\omega} 
\newcommand{\Ga}{\Gamma}
\newcommand{\tore}{{\mathbb{T}}}    
\newcommand{\Courbe}{{\mathscr{C}}} 
\newcommand{\tf}{T}                
\newcommand{\la}{\langle}
\newcommand{\ra}{\rangle}
\newcommand{\C}{\mathbb C}
\newcommand{\R}{\mathbb R}
\newcommand{\N}{\mathbb N}
\newcommand{\Z}{\mathbb Z}
\newcommand{\su}{\operatorname{SU}(2)}   
\newcommand{\Sl}{\operatorname{Sl } (2, \C )}  
\newcommand{\Slr}{\operatorname{Sl } (2, \R )}  
\newcommand{\tr}{\mbox{\rm tr}}       
\newcommand{\trace}{\operatorname{tr}}
\newcommand{\acos}{\mbox{\rm arccos}}
\newcommand{\mat}[1]{\begin{bmatrix} #1 \end{bmatrix}}
\newcommand{\boM}{\mathcal{M}}      
\renewcommand{\phi}{\varphi}
\renewcommand{\tilde}{\widetilde}
\newcommand{\Mult}{\operatorname{Mult}} 
\newcommand{\Sym}{\operatorname{Sym}} 
\newcommand{\End}{\operatorname{End}}
\newcommand{\Hom}{\operatorname{Hom}}
\newcommand{\Op}{\operatorname{Op}}
\newcommand{\hs}{\operatorname{HS}}
\newcommand{\Tf}{{\mathcal{T}}}  
\newcommand{\Kauf}{{\mathcal{K}}}  
\newcommand{\Ci}{{\mathcal{C}}^{\infty}}
\newcommand{\hb}{\hbar}
\newcommand{\Ah}{ - e ^{i\hb /4}} 
\newcommand{\ep}{\varepsilon}
\title{Multicurves and regular functions on the representation variety of a surface in $\su$}
\author{L. Charles and J. March{\'e} \footnote{Institut de
    Math{\'e}matiques de Jussieu (UMR 7586), Universit{\'e} Pierre et
    Marie Curie -- Paris 6, Paris, F-75005 France.} }
\date{}
\begin{document}

\maketitle


\begin{abstract}
Given a compact surface $\Si$, we consider the representation space
$\boM(\Sigma)=\Hom(\pi_1(\Sigma),\su)/\su$. We show that the trace
functions associated to multicurves on $\Si$ are linearly independent
as functions on $\boM(\Sigma)$. The proof relies on the Fourier
decomposition of the trace functions with respect to a torus action on
$\boM(\Si)$ associated to a pants decomposition of $\Si$. Consequently
the space of trace functions is isomorphic to the skein algebra at
$A=-1$ of the thickened surface. 
\end{abstract}

\section{Introduction}

Given a compact and oriented surface $\Sigma$, one define its representation space as the quotient $\Hom(\pi_1(\Sigma),G)/G$. For $G=\Sl,\su,\Slr$, we obtain three related and celebrated spaces. The first one is an algebraic variety which classifies semi-stable complex bundles of rank 2 over $\Sigma$ with trivial determinant: it contains the two other ones. The second space is compact and contains an open and dense subset supporting a natural symplectic form. Its quantization provides a construction of a topological quantum field theory (TQFT) which has interesting interactions with the topology of 3-manifolds. Finally the last space contains as a connected component the Teichm{\"u}ller space, that is, the moduli space of hyperbolic structures on $\Sigma$.

The purpose of this article is to study a special class of functions
on these spaces called "trace functions". Given a 1-dimensional
submanifold $\gamma$ of $\Sigma$ and a representation $\rho\in
\Hom(\pi_1(\Sigma),G)/G$ one set
$$f_{\gamma,G}([\rho])=\prod_i(-\trace(\rho(t_i)))$$ where the $t_i$'s
represent the free homotopy classes of the components of $\gamma$. 

Assume that the Euler characteristic of $\Si$ is
negative. We prove that the functions $f_{\gamma,G}$ where $\gamma$ runs
over the isotopy classes of 1-submanifolds of $\Sigma$ without
component bounding a disc are linearly independent as functions on
$\Hom(\pi_1(\Sigma),G)/G$ for $G= \su$ or $\Sl$. 
In the case $G = \Sl$, it follows that these functions form a basis of the
coordinate ring of the representation variety. Consequently the coordinate ring is isomorphic to the skein algebra of
$\Si \times [0,1]$ at $A= -1$, cf. sections \ref{sec:results} and
\ref{sec:kauffman-module} for precise statements. 

 D. Bullock showed in \cite{bul} that the last assertion is
 equivalent to proving that the skein algebra has no zero divisors. Bullock's proof rely on a delicate analysis of algebraic relations between trace functions which started in \cite{gam}. 
Our strategy is completely different and somewhat simpler: using a
pants decomposition of the surface, we define on
$\boM(\Sigma)=\Hom(\pi_1(\Sigma),\su)/\su$ an action of the torus $\tore^{\Courbe}$ where $\Courbe$ is the set of separating curves of the decomposition. 
Moreover, 1-dimensional submanifolds of $\Sigma$ are parametrized up to isotopy by their Dehn coordinates, which is a system of parameters depending on the pants decomposition. Finally, we compute the Fourier decomposition of the trace functions relatively to the action of $\tore^{\Courbe}$ and show that one can recover the geometric intersection number of two curves and more generally the full Dehn parameters of a multicurve via its Fourier decomposition. This allows us to prove our assertion. 

Our motivation to study the trace functions is the quantization of the representation
space $\boM ( \Sigma)$. First, the space of trace functions is a
Poisson algebra, the Poisson bracket being defined with the symplectic
structure of Atiyah-Bott \cite{atiyah}. Hence the skein algebra at
$A=-1$ inherits a Poisson bracket. It appears that the skein algebra
at $A=e^{-i\hb /4}$ is a deformation quantization of this Poisson
algebra. This is a consequence of the Goldman formula \cite{goldman2}
expressing the bracket of trace function, cf. \cite{bfk} and \cite{turaev}. 

Not only do we have a formal
quantization, but also a strict quantization
provided by the topological quantum field theory. Working with the
combinatorial version of TQFT \cite{bhmv} we associate to $\Si$ a
family of Hilbert spaces $V_k ( \Si)$  and to each curve $\ga$ a family
of operators $ ( \Op_k ( \ga ) : V_k ( \Si) \rightarrow V_k (
\Si))$. Then it appears that the natural symbol of this family of operators is the trace
function of $\ga$. Indeed, by \cite{mn} the asymptotic behavior of $\Op_k (
\ga)$ as the level $k$ tends to infinity is controlled at first order
by the trace function. Furthermore the composition and the commutator
of operators corresponds to the product and Poisson bracket of the trace
functions. So the relation between the curve operators and the trace
functions is similar to the one in microlocal analysis between operators
and their symbol.  In this point of view, the interest of our result
is to produce non-vanishing trace functions and consequently
non-vanishing curve operators. This has non-trivial consequences like the
asymptotic faithfulness of the representation of the mapping class
group on $V_k (\Si)$ provided by TQFT, cf. \cite{andersen,fww,mn}.  

In the next parts of this introduction, we give precise statement of our
results (sections \ref{sec:results}
and \ref{sec:kauffman-module}) and on the quantization of $\boM (
\Si)$ (sections \ref{sec:poisson} and \ref{sec:tqft}). Note that we do not use
skein modules and TQFT in the sequel of the paper. 
 The section 2 introduces the Dehn coordinates for multicurves on a surface while section 3 describe torus actions on $\Hom(\pi_1(\Si,\su)/\su$ which are part of the action-angle coordinates of \cite{jeffrey}. In Section 4, we give the main ingredients for computing the Fourier coefficients of the trace functions. Section 5 describes the applications of the preceding computations while Section 6 explains the isomorphism of the trace functions algebra with the coordinate ring of the representation variety in $\Sl$.
 
\subsection{Results} \label{sec:results}

Let $G$ be the group $\su $ or $\Sl$. Consider the space $\Hom ( \pi,
G ) / G$ of morphisms from a group $\pi$ to $G$ up to conjugation. For any $t \in
\pi$, introduce the function $\chi_t$ of $\Hom ( \pi, G ) / G $
defined by  
$$ \chi_{t}( [\rho]) =- \trace ( \rho (t)), \qquad \rho \in \Hom ( \pi,
G)$$ 
Let us call a {\em trace function} any linear combination of the $\chi_{t}$'s
with complex coefficients. Because of the relation 
\begin{gather} \label{eq:trace}
  \trace (a) \trace (b) = \trace ( ab)  + \trace ( a^{-1} b) ,
\qquad \forall \; a, b  \in G 
\end{gather}
the space $\Tf ( \pi , G)$  of trace functions is a subalgebra of the
algebra of complex valued functions of $ \Hom ( \pi, G ) / G $.

When $\pi$ is of finite type, $\Tf ( \pi , \Sl )$ is
finitely generated and is the coordinate ring of the
$\Sl$ character variety of $\pi$. The relation with the usual definition is
explained in part \ref{variety}. 

Assume now that $\pi$ is the fundamental group of a manifold $M$. Let us
define a trace function $f_{\ga, G}$ for any isotopy class $\ga$ of
1-dimensional compact submanifold of $M$ without arc components. 
Let $t_1$, \ldots, $t_n$ be the homotopy classes of loops which are
freely homotopic to the connected component of a representant of $\ga$
and set 
$$  f_{\ga,G} = \chi_{t_1} ... \chi_{t_n}$$ 
Since the $t_i$'s are well-defined up to conjugation and inversion, this product only depends on
the isotopy class $\ga$. 

We will use these definitions in two cases. First, $M$ is a surface and $\ga$ an isotopy
class of {\em multicurve}, that is no component of $\ga$ bounds a
disc. Second, $M$ is 3-dimensional, the 1-dimensional compact
submanifolds without arc component are then called the {\em links} of $M$. 

\begin{theo} \label{the:result}
Let $G$ be  $\su$ or $\Sl$ and $\pi$ be the fundamental group of a
compact orientable surface $\Si$ with boundary (possibly empty) and negative Euler characteristic. Then the trace functions $f_{\ga, G}$, where $\ga$
runs over the isotopy classes of multicurves of $\Si$ which do not meet the boundary, form a basis of $\Tf ( \pi , G)$.
\end{theo}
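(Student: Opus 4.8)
The plan is to prove the spanning and the linear independence separately, and to reduce the $\Sl$ case to the $\su$ case. For the reduction, $f_{\ga,\su}$ is the pullback of $f_{\ga,\Sl}$ along the natural map $\boM(\Si)\to\Hom(\pi,\Sl)/\Sl$; hence any linear relation among the $f_{\ga,\Sl}$ pulls back to the same relation among the $f_{\ga,\su}$, so independence over $\su$ implies independence over $\Sl$. As the spanning argument below works verbatim for either group, it suffices to prove independence for $G=\su$, working on $\boM(\Si)=\Hom(\pi,\su)/\su$.

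Spanning is the elementary part. By definition $\Tf(\pi,G)$ is spanned as a vector space by the $\chi_t$, so it is enough to write each $\chi_t$ as a combination of the $f_{\ga,G}$. Realizing $t$ as an immersed loop on $\Si$ with transverse self-intersections, I would induct on the number of self-intersections: the relation \eqref{eq:trace}, rewritten as the Kauffman skein relation at $A=-1$, resolves one crossing and replaces $\chi_t$ by a signed sum of $\chi$'s of loops with strictly fewer crossings. A component bounding a disc is null-homotopic and contributes the scalar $\chi_1=-2$, so it may be factored out. At the base of the induction every loop is embedded and the resulting products of $\chi$'s are exactly the multicurve functions $f_{\ga,G}$.

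Linear independence is the heart of the argument and relies on Sections 2--4. Fix a pants decomposition of $\Si$, with set of interior curves $\Courbe$; this provides Dehn coordinates (Section 2) giving a bijection between isotopy classes of multicurves and admissible tuples, together with the action of $\tore^{\Courbe}$ on $\boM(\Si)$ (Section 3). Every trace function on $\boM(\Si)$ then has a Fourier decomposition with respect to $\tore^{\Courbe}$, whose coefficients are computed in Section 4. The strategy is to attach to each multicurve $\ga$ a \emph{leading weight} $w(\ga)$, read off from the intersection-number part of its Dehn coordinates, such that the Fourier coefficient of $f_{\ga,\su}$ at $w(\ga)$ is nonzero and such that the full Dehn data of $\ga$---including the twist parameters---is recoverable from the coefficients of $f_{\ga,\su}$ supported near $w(\ga)$. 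Granting this, independence follows by an extremal argument: in a putative finite relation $\sum_\ga c_\ga f_{\ga,\su}=0$, pick a $\ga$ with $c_\ga\ne 0$ whose weight $w(\ga)$ is maximal for a suitable partial order on $\Z^{\Courbe}$; its leading coefficient cannot be cancelled by the other terms, forcing $c_\ga=0$.

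The main obstacle is precisely the content hidden in the phrase ``recover the Dehn parameters from the Fourier decomposition.'' Concretely, one must compute the Fourier coefficients of $f_{\ga,\su}$ explicitly enough to (i) locate their support in terms of the geometric intersection numbers of $\ga$ with the curves of $\Courbe$, (ii) prove the extremal coefficient is genuinely nonzero, and (iii) show that the assignment $\ga\mapsto$ (leading weight together with its coefficient) is injective on multicurves, so that no two distinct $\ga$ can interfere in the extremal term. Choosing the partial order on $\Z^{\Courbe}$ so that the extremal cancellation closes is the final delicate point, and it is where the action-angle structure of \cite{jeffrey} enters.
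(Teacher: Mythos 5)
Your outline follows the paper's own strategy: reduce $\Sl$ to $\su$ by pulling back along $j$, obtain spanning from the trace relation by resolving crossings, and prove independence by Fourier analysis for the torus action $\tore^{\Courbe}$ attached to a pants decomposition, closing with an extremal argument in the Dehn coordinates. The reduction and the spanning sketch are fine. The gap is that the three items you yourself flag as ``the main obstacle'' are not side issues to be granted: they are the entire technical content of the independence proof, and nothing in your write-up supplies them. In the paper they are, respectively: the computation showing that $\Pi_k(\tf_h)$ vanishes once $|k|$ exceeds the number of intersection points with the cutting curve and, at the extremal value of $k$, equals an explicit contraction of the cut-open holonomy against eigenvectors of the holonomy along the curve (Lemma \ref{lem:basic-computation}); the fact that a fractional Dehn twist of order $\ell/k$ multiplies the extremal isotype by $(-1)^{\ell(k-1)}e^{\pm i\ell a_C}$ (Lemma \ref{lem:fractional-dehn-twist}), which is what makes the twist parameters visible; and the non-vanishing of $\Pi_M(\tf_{C(M,0)})$ on $a^{-1}(\operatorname{Int}\De)$ (Lemma \ref{lem:non-vanishing}), which requires a genuine geometric argument showing that vanishing forces the cut-open holonomies to preserve eigenspace decompositions and hence pushes $a(E,\te)$ to the boundary of $\De$. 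Without these, the proof does not exist.

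Two points in the outline would also need correction before it could close. First, the twist data is not read from Fourier coefficients ``supported near $w(\ga)$'': for fixed $m=M$, all the $\tf_{C(M,t)}$ have isotypes supported in the same box $|k_j|\leqslant M_j$, and $t$ is recovered from the dependence of the single extremal coefficient $\Pi_M(\tf_{C(M,t)})$ on the point of the moduli space, through the factor $\prod_{j}\varphi_{M_j,t_j}(a_j)$ of the angle functions $a_j$. Second, and consequently, injectivity of $\ga\mapsto(\text{leading weight},\text{leading coefficient})$ is strictly weaker than what the cancellation step needs: to conclude that all $\lambda_{M,t}$ vanish from $\sum_t\lambda_{M,t}\Pi_M(\tf_{C(M,t)})=0$ you need the functions $\prod_j\varphi_{M_j,t_j}(a_j)$, as $t$ varies over admissible twists, to be \emph{linearly independent} on $a^{-1}(\operatorname{Int}\De)$, which in turn uses that $\De$ has non-empty interior (remark after Theorem \ref{theo:image_moment}). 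The extremal scheme itself is sound --- with the lexicographic order on the $m$'s, $m<M$ forces some $m_j<M_j$ and hence $\Pi_M(\beta_m)=0$ --- but only once these ingredients are actually proved.
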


The fact that the trace functions $(f_{\ga, G})$ are a spanning set follows easily
from the relations (\ref{eq:trace}). The proof that they are linearly
independent is much more delicate and is the content of theorem \ref{independance}.
Let $j$ be the obvious map  $$\Hom ( \pi, \su ) / \su \rightarrow \Hom
( \pi , \Sl )/ \Sl.$$
By the previous theorem, $j^*$ maps $\Tf ( \pi
, \Sl)$ bijectively onto $\Tf ( \pi, \su)$.

\subsection{The Kauffman module at $A=-1$} \label{sec:kauffman-module}

Let $M$ be a 3-dimensional oriented compact manifold. The skein
module of $M$ at $A=-1$ is defined as the complex vector space
freely generated by the set of isotopy classes of links of $M$ quotiented by
the relations of Figure \ref{fig:kauffman} for $A=-1$.

We denote by $\Kauf (M, -1)$ this vector space. As a consequence of the relation (\ref{eq:trace}), we have a
well-defined map $$\Kauf (M ,-1) \rightarrow \Tf ( \pi  , G) $$
sending a link $\ga$ to $f_{\ga, G}$. Here $\pi$ is the fundamental
group of $M$. 

Assume that $M = \Si \times [0,1]$ where $\Si$ is an orientable compact
surface. Then by identifying $\Si$ with $\Si \times \{ 1/2 \}
\subset M$, each multicurve of $\Si$ defines a link in $M$. By a consequence of the Reidemeister theorem, the family of isotopy
classes of multicurves of $\Si$ is a basis of $\Kauf (M ,-1)$. As a
corollary of Theorem \ref{the:result}, we obtain the 

\begin{theo} \label{the:bullock}
For any compact orientable surface $\Si$ with $\chi(\Si)<0$, the natural map 
$$ \Kauf (\Si \times [ 0,1]  ,-1) \rightarrow \Tf ( \pi  , G) $$
is an isomorphism. 
\end{theo}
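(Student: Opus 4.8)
The plan is to deduce the statement directly from Theorem~\ref{the:result} via the elementary principle that a linear map carrying a basis onto a basis is an isomorphism. The map $\Kauf(\Si\times[0,1],-1)\to\Tf(\pi,G)$ is already known to be well defined, the skein relations at $A=-1$ being compatible with the trace relation (\ref{eq:trace}) under the assignment $\ga\mapsto f_{\ga,G}$. So the only remaining task is to name a convenient basis on each side and verify that the map matches them up.

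First I would invoke the two basis statements already at hand. On the source, the consequence of the Reidemeister theorem recalled above says that the isotopy classes of multicurves of $\Si$ form a basis of $\Kauf(\Si\times[0,1],-1)$: concretely, every link in $\Si\times[0,1]$ reduces, through the skein relations at $A=-1$, to a linear combination of multicurves lying in $\Si\times\{1/2\}$, with no nontrivial relation among these. On the target, Theorem~\ref{the:result} asserts that the trace functions $f_{\ga,G}$, indexed by the same set of isotopy classes of multicurves (those not meeting $\partial\Si$), form a basis of $\Tf(\pi,G)$.

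Next I would observe that, under the identification $\Si=\Si\times\{1/2\}$, the natural map sends the source basis element $[\ga]$ precisely to the target basis element $f_{\ga,G}$, hence induces a bijection between the two indexing sets of multicurves. Carrying a basis bijectively onto a basis, it is a linear isomorphism. Equivalently, writing a class of $\Kauf(\Si\times[0,1],-1)$ as $\sum_\ga c_\ga[\ga]$, its image $\sum_\ga c_\ga f_{\ga,G}$ spans $\Tf(\pi,G)$ (surjectivity, from the spanning statement already noted to follow from (\ref{eq:trace})) and vanishes only when all $c_\ga=0$ (injectivity, from the linear independence in Theorem~\ref{the:result}).

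The genuine difficulty lies entirely in Theorem~\ref{the:result}, i.e.\ in the linear independence of the $f_{\ga,G}$, which is where the Fourier analysis with respect to the torus action $\tore^{\Courbe}$ does the real work; here I would use that result as a black box. The only point demanding care at this final stage is the bookkeeping of the indexing sets: one must check that the multicurves furnishing the skein basis (links in $\Si\times[0,1]$ up to isotopy) correspond exactly to the multicurves of $\Si$ not meeting the boundary that index the trace-function basis, so that the two bases are genuinely matched by a bijection and not merely a surjection.
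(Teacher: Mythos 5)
Your argument is correct and coincides with the paper's own: the authors likewise obtain Theorem \ref{the:bullock} by combining the fact (via the Reidemeister theorem) that isotopy classes of multicurves form a basis of $\Kauf(\Si\times[0,1],-1)$ with the basis statement of Theorem \ref{the:result}, so the map carries a basis onto a basis. Your extra remark about matching the two indexing sets of multicurves is a reasonable point of care but introduces nothing beyond what the paper implicitly uses.
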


Bullock proved in \cite{bul} that the kernel of this map consists in the nilradical of $\Kauf(\Si\times [0,1],-1)$. Moreover, it is claimed without proof in  \cite{prz} that this nilradical is zero.

The skein module $\Kauf ( \Si \times [0,1], -1)$ is an algebra where the product $\gamma \cdot \delta$ is defined by stacking $\gamma$ over $\delta$. As a consequence of the relations (\ref{eq:trace}) this product is sent to the product of functions by
the isomorphism of Theorem \ref{the:bullock}.

\subsection{$\Tf ( \pi  , G)$ as a Poisson algebra} \label{sec:poisson}

In this subsection and the next one, $G$ is the group $\su$, $\Si$ is a closed orientable
surface and $\pi$ its fundamental group. The space $\boM = \Hom ( \pi , G) /G$ has a
natural topology such that the trace functions are
continuous. The subset $\boM^s$ consisting of classes of
irreducible representations is open and dense. By \cite{atiyah} and
\cite{goldman}, $\boM^s$ is a symplectic manifold. Furthermore Goldman
in \cite{goldman2} expressed the Poisson bracket of the trace functions of two curves
intersecting transversally as a trace function. Consequently, $\Tf (
\pi  , G)$ is a Poisson subalgebra of $\Ci ( \boM^s, \C)$.  
The Poisson bracket appears also naturally on the topological side in the
following way. 

Let us introduce the Kauffman module $\Kauf (M, \Ah )$ of a
3-dimensional compact oriented manifold. It is defined as the free $\C [[ \hb ]]$-module generated by the set of isotopy classes
of banded links of $M$ quotiented by the relations of Figure \ref{fig:kauffman} with $A=-e^{i\hbar/4}$.

\begin{figure}[width=8cm,height=3cm]
\begin{center}
\begin{pspicture}(-2,0)(3,3)
\includegraphics{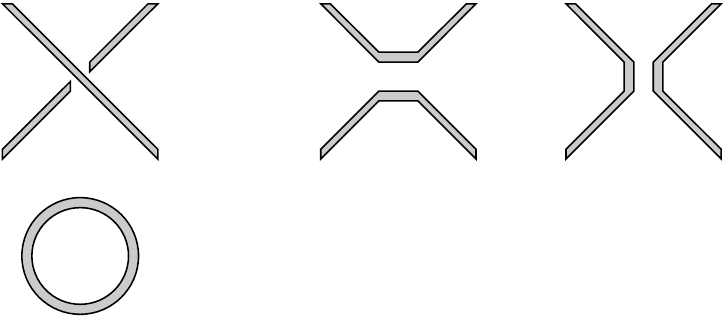}
\put(-5.2,2.3){$=A$}
\rput(-1.9,2.5){$+A^{-1}$}
\rput(-4,0.7){$=(-A^2-A^{-2})\quad \emptyset$}
\end{pspicture}
\caption{Kauffman relations}\label{fig:kauffman}
\end{center}
\end{figure}

One has a natural $\C$-linear map from $\Kauf (M, \Ah )$ to $\Kauf(M,
-1)$ sending a linear combination $\sum c_i(\hb) \ga_i$
of banded links to $\sum c_i(0) \tilde{\ga}_i$, where $\tilde{\ga}_i$
is the core of $\ga_i$. The kernel of this map is $\hb \Kauf (M, \Ah )$. 

Assume now that $M = \Si \times [0,1]$. Using the isomorphism of
theorem \ref{the:bullock},
 we obtain an exact sequence
\begin{gather}   \label{eq:suite_exacte}
0 \rightarrow \hb \; \Kauf ( M, \Ah ) \rightarrow  \Kauf (M,
\Ah ) \stackrel{\si}{\longrightarrow} \Tf ( \pi, G) \rightarrow 0  
\end{gather} 
Furthermore the
multiplication by $\hb$ is a bijection 
\begin{gather} \label{eq:mult}
   \Kauf ( M, \Ah )
\rightarrow  \hb \; \Kauf (M, \Ah )
\end{gather}
The injectiveness follows from the fact that $\Kauf (M,\Ah )$ is free as a  $\C[[\hb]]$-module, a basis being given by the family of isotopy classes of
multicurves of $\Si$.

Finally, $\Kauf ( M, \Ah )$ has a natural stacking product $*$ and a natural involution $^-$ defined by conjugating coefficients and sending a banded link $\gamma$ to $S(\gamma)$ where $S:\Sigma\times[0,1]\to\Sigma\times[0,1]$ is defined by $S(x,t)=(x,1-t)$. One has $ \overline {f * g}  = \overline {g} * \overline{f}$.

\begin{theo} \label{the:Def}
Let $G$ be the group $\su$, $\Si$ a closed surface, $\pi$ its fundamental
  group and $M = \Si \times [0,1]$. Then the Kauffman module $\Kauf
  (M, \Ah )$ with its product $*$ and involution $^-$ is a
  deformation of the Poisson algebra $ \Tf (
\pi, G) $ in the sense that for any $f, g \in \Kauf (M, \Ah )$
one has 
\begin{gather*}  \si (f * g) = \si (f) \si (g) , \qquad \si ( \overline {f} ) =
\overline { \si (f)} \\
  \si ( \hb^{-1} ( f * g - g * f ) ) = \frac{1}{i} \{ \si (f), \si (g) \} 
\end{gather*} 
where $\si$ is defined in (\ref{eq:suite_exacte}) and
$\hb^{-1}$ is the inverse of the map (\ref{eq:mult}).
\end{theo}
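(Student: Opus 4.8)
The plan is to establish Theorem \ref{the:Def} by reducing all three identities to the already-established isomorphism of Theorem \ref{the:bullock} together with an analysis of the leading and subleading terms of the Kauffman bracket relations as $A = -e^{i\hbar/4}$ deforms away from $A = -1$. The map $\sigma$ is, by construction via the exact sequence \eqref{eq:suite_exacte}, the composition of the specialization $A \mapsto -1$ (which sends $\sum c_i(\hb)\ga_i$ to $\sum c_i(0)\tilde\ga_i$) with the isomorphism $\Kauf(M,-1) \xrightarrow{\sim} \Tf(\pi,G)$. So the first two identities should be essentially formal: they assert that $\sigma$ is a homomorphism for the stacking product and intertwines the involutions.

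\medskip
\noindent\textbf{First I would} verify $\sigma(f * g) = \sigma(f)\sigma(g)$. The stacking product on $\Kauf(M,\Ah)$ reduces modulo $\hb$ to the stacking product on $\Kauf(M,-1)$, and by the remark at the end of Section \ref{sec:kauffman-module} the isomorphism of Theorem \ref{the:bullock} carries this stacking product to the pointwise product of functions — this is exactly relation \eqref{eq:trace} applied geometrically. Hence $\sigma(f*g) = \sigma(f)\sigma(g)$ follows by reducing mod $\hb$ and transporting through the isomorphism. For the involution identity $\sigma(\overline f) = \overline{\sigma(f)}$, I would note that $^-$ conjugates the coefficients $c_i(\hb)$ (so $c_i(0) \mapsto \overline{c_i(0)}$, giving complex conjugation of functions) and reverses the stacking direction via $S(x,t)=(x,1-t)$; but reversing the $[0,1]$ factor sends each loop to one freely homotopic to its inverse, and $\chi_t = \chi_{t^{-1}}$ since $\trace(\rho(t)) = \trace(\rho(t)^{-1})$ in $\su$. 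Thus the geometric part of $^-$ acts trivially on $\Tf(\pi,G)$, leaving only complex conjugation, which matches $\overline{\sigma(f)}$.

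\medskip
\noindent\textbf{The hard part will be} the commutator identity $\sigma(\hb^{-1}(f*g - g*f)) = \frac{1}{i}\{\sigma(f),\sigma(g)\}$, which is genuinely first-order and not merely a mod-$\hb$ statement. By bilinearity it suffices to treat $f, g$ equal to single multicurves, and after isotopy I would arrange two curves $\gamma, \delta$ meeting transversally in finitely many points. The plan is to expand $f * g - g * f$ using the Kauffman skein relation at $A = -e^{i\hbar/4}$: at each crossing the difference between stacking $\gamma$ over $\delta$ versus $\delta$ over $\gamma$ is resolved by the relation in Figure \ref{fig:kauffman}, producing the two smoothings weighted by $A^{\pm 1} - A^{\mp 1} = -(e^{i\hbar/4} - e^{-i\hbar/4}) = -2i\sin(\hbar/4) = -\frac{i\hbar}{2} + O(\hbar^2)$ with a sign depending on the crossing orientation. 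Summing over the intersection points $p \in \gamma \cap \delta$, the leading $\hb^1$-term of $f*g - g*f$ is therefore $-\tfrac{i\hbar}{2}\sum_p \epsilon(p)(\gamma *_p \delta)$, where $\gamma *_p \delta$ denotes the multicurve obtained by the oriented smoothing at $p$ and $\epsilon(p) = \pm 1$ is the sign of the crossing. Applying $\hb^{-1}$ and then $\sigma$, and comparing with Goldman's formula \cite{goldman2} for the Poisson bracket $\{\chi_\gamma,\chi_\delta\} = \tfrac{1}{2}\sum_p \epsilon(p)(\chi_{\gamma *_p \delta} - \chi_{\gamma *_p' \delta})$ on $\boM^s$, I would match the combinatorial smoothing terms to Goldman's. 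The genuine obstacle is bookkeeping the signs and the precise normalization: one must check that the two Kauffman smoothings at a given crossing correspond exactly to the two terms in Goldman's bracket with matching signs, so that the factor $\frac{1}{i}$ emerges cleanly from $-\tfrac{i}{2}$ versus $\tfrac12$. This is where the references \cite{bfk} and \cite{turaev} enter; I would invoke their verification of the Goldman formula at the skein-theoretic level rather than recompute it, citing them for the precise sign conventions and treating the remainder of the argument as an assembly of the three pieces.
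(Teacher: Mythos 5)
Your proposal follows essentially the same route as the paper: the first identity from Theorem \ref{the:bullock} being an algebra isomorphism, the involution identity from the reality of the multicurve classes and of the $\su$ trace functions (your observation that $\trace(\rho(t))=\trace(\rho(t)^{-1})=\overline{\trace(\rho(t))}$ in $\su$ is exactly what lets the coefficient conjugation match $\overline{\sigma(f)}$), and the commutator identity from Goldman's formula via \cite{goldman2}, \cite{bfk} and \cite{turaev}. The paper's own proof is the short paragraph following the theorem and gives no more detail than you do --- your first-order expansion $A-A^{-1}=-i\hb/2+O(\hb^2)$ at the crossings is in fact more explicit than the paper's one-line citation --- the only ingredient it adds being the precise choice of invariant scalar product and symplectic form, on which the constant in Goldman's formula depends.
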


The first equation follows from the fact that the isomorphism of
theorem \ref{the:bullock} is an algebra morphism. For the second
equation, observe that the classes in $\Kauf ( M , \Ah )$ of the 
multicurves of $\Si$ are real and that their trace functions are real
too. Last formula on the commutator follows from the Goldman formula
expressing the Poisson bracket of two trace functions (see
\cite{goldman2}). The Poisson bracket depends on the symplectic form
which itself depends on a choice of an invariant scalar product on the
Lie algebra. We set $\langle A,B\rangle =\tr(AB^*)$ for $A,B\in
\mathfrak{su} (2)$. The symplectic form on $\Hom(\pi ,G)/G$ we are
dealing with is the symplectic reduction of the form
$\omega(a,b)=\int_{\Sigma}\langle a\wedge b\rangle$ for $a,b\in
\Omega^1(\Sigma,\mathfrak{su} (2))$.

\subsection{Topological Quantum Field Theory} \label{sec:tqft}

Let $\C [ A^{\pm 1} ]$ be the ring of Laurent polynomials and
$\Kauf (M , A)$ be the $\C[A^{\pm 1}]$-module freely generated by the
isotopy classes of banded links in $M$ quotiented by the
relations of Figure \ref{fig:kauffman}.

By sending $ \sum P_i (A) \ga_i$ to $\sum P_i
(\Ah ) \ga_i$, one identifies the skein module $\Kauf (M, A)$
with a complex subalgebra of $\Kauf (M, \Ah )$ preserved by the
involution $^-$. 

For any integer $k \geqslant 2$, the topological quantum field theories (TQFT) coming from Chern-Simons theory with gauge group $\su$ provides  a complex Hermitian space
$V_k ( \Si )$ of finite dimension. In the geometric framework, the integer $k-2$ is called the level while in the combinatorial framework constructed in \cite{bhmv}, the authors use the parameter $p=2k$.
Following them, one has a natural algebra morphism 
$$ \Kauf (M, \zeta_k) \rightarrow \End ( V_k ( \Si))$$
where $ \zeta_k = - e ^{i \pi
  /2k}$ and $ \Kauf (M, \zeta_k) := \Kauf (M, A) \otimes _{ A= \zeta_k} \C$.

For any $f \in \Kauf (M, A)$ and any $k \geqslant 2$, we denote by
$\Op _k ( f) $ the corresponding operator of $V_k (\Si)$. 
\begin{theo} \label{the:TQFT}
For any $f, g \in \Kauf (M, A)$, one has
$$ \Op _k (f * g ) = \Op_k (f) \Op_k (g), \qquad \Op_k ( \bar{f} )
= \Op_k (f ) ^* $$
and as $k \rightarrow \infty$
$$ \trace ( \Op _k ( f)) = \Bigl( \frac{k}{4\pi^2} \Bigr)^n
\int_{\boM  ^s} \si ( f) \mu + O ( k^{n-1})$$ 
where $n$ is half the dimension of $\boM^s$ and $\mu$ is the Liouville
measure. 
\end{theo}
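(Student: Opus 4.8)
The plan is to treat the algebraic identities and the asymptotic formula by completely different means. The two equations $\Op_k(f*g) = \Op_k(f)\Op_k(g)$ and $\Op_k(\bar f) = \Op_k(f)^*$ are structural properties of the combinatorial TQFT of \cite{bhmv} and require no asymptotics. The map $\Kauf(M,\zeta_k) \to \End(V_k(\Si))$ is by construction a morphism of algebras sending the stacking product to composition, so restricting to $f, g \in \Kauf(M, A)$ and specializing $A = \zeta_k$ gives the first identity. The second expresses the compatibility of the involution $^-$ with the Hermitian form on $V_k(\Si)$: the reflection $S(x,t) = (x, 1-t)$ reverses the $[0,1]$-direction of the cobordism $M = \Si \times [0,1]$, which the functor turns into the Hermitian adjoint, while conjugation of coefficients accounts for the sesquilinearity. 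I would simply record these as consequences of the construction.

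For the trace asymptotic I would first reduce, by linearity of both sides in $f$ and the fact that the classes of multicurves form a basis (Theorem \ref{the:bullock}), to the case $f = \ga$ a single multicurve. The decisive input is \cite{mn}: identifying $V_k(\Si)$ with the geometric quantization of $\boM$ --- holomorphic sections of a suitable power of the prequantum line bundle --- the curve operator $\Op_k(\ga)$ is a Toeplitz operator with principal symbol the trace function $\si(\ga) = f_{\ga, G}$. Granting this, the statement becomes the semiclassical trace asymptotics for Toeplitz operators (Boutet de Monvel--Guillemin): the trace of a Toeplitz operator of principal symbol $h$ on the level-$k$ quantization of a compact symplectic manifold of real dimension $2n$ has leading term $c^n k^n \int h \, \mu$, with $\mu$ the Liouville measure and $c$ depending only on the normalization of the prequantum data. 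Substituting $h = \si(\ga)$ and fixing $c$ from the symplectic form $\omega(a,b) = \int_\Si \langle a \wedge b \rangle$ together with the level convention ($p = 2k$ in \cite{bhmv}) produces the constant $k/4\pi^2$ and the claimed formula; as a check, the case $f = \emptyset$ recovers the Riemann--Roch/Verlinde asymptotics of $\dim V_k(\Si)$.

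I expect the main obstacle to be the passage from the combinatorial operators to honest Toeplitz operators on the geometric quantization, that is, the identification of the two quantizations underlying \cite{mn}; once $\Op_k(\ga)$ is known to be Toeplitz with the correct symbol, the asymptotics are standard. A subsidiary difficulty is that the Toeplitz machinery lives on the smooth symplectic manifold $\boM^s$, whereas $V_k(\Si)$ quantizes the whole singular space $\boM$; I would need to argue that the reducible representations $\boM \setminus \boM^s$, of positive codimension, contribute only to the $O(k^{n-1})$ remainder, so that the leading integral is over $\boM^s$ as stated. Pinning down the exact constant $k/4\pi^2$ --- reconciling the level convention, the scalar product $\langle A, B\rangle = \tr(AB^*)$ on $\mathfrak{su}(2)$, and the normalization of $\mu$ --- is the remaining point of care, but it is bookkeeping once the Toeplitz structure is available.
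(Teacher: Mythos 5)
Your treatment of the two algebraic identities coincides with the paper's: they are recorded as consequences of the general properties of the combinatorial TQFT of \cite{bhmv} (the morphism $\Kauf(M,\zeta_k)\to\End(V_k(\Si))$ intertwines stacking with composition, and the reflection $S$ together with coefficient conjugation is turned into the Hermitian adjoint). Nothing more is said in the paper, and nothing more is needed.

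For the trace asymptotics, however, your route diverges from the paper's, and the divergence is exactly where your argument has a gap. The paper does not reprove the estimate: it quotes \cite{mn}, and the method of that reference is a \emph{direct skein-theoretic computation} --- the trace of $\Op_k(\ga)$ is evaluated explicitly in the combinatorial basis attached to a pants decomposition, and the resulting sum over colorings is recognized as a Riemann sum converging to $\int_{\boM^s}\si(f)\,\mu$ via the Jeffrey--Weitsman action-angle description of the Liouville measure (the same torus actions used in Section \ref{sec:torus-action}). Your proposal instead takes as its ``decisive input'' that $\Op_k(\ga)$ is a Toeplitz operator on the geometric quantization of $\boM$ with principal symbol $f_{\ga,G}$, and then invokes Boutet de Monvel--Guillemin. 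But that input is \emph{not} what \cite{mn} provides: establishing the Toeplitz nature of the curve operators requires first identifying the combinatorial spaces $V_k(\Si)$ with the geometric quantization and then controlling the full operator (not just its trace), which is a substantially stronger statement than the trace asymptotics and was not available at the time (it was only established later). You correctly flag this identification as ``the main obstacle,'' but as written the proposal does not overcome it, so the argument does not close. The subsidiary points you raise (the singular locus contributing only to the remainder, since $\boM\setminus\boM^s$ has measure zero, and the normalization bookkeeping producing $k/4\pi^2$) are handled correctly, and the reduction to a single multicurve needs only that multicurves form a $\C[A^{\pm1}]$-basis of $\Kauf(M,A)$, not Theorem \ref{the:bullock}. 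The honest fix is either to cite \cite{mn} for the trace formula as the paper does, or to supply a genuine proof of the Toeplitz property, which is a separate and harder project.
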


First part follows from the general properties of TQFT. The
estimation of the trace has been proved in \cite{mn}. A factor
$4\pi^2$ appears due to the different normalization of the symplectic
form.  This estimation gives some information on the asymptotic
behavior of $ \Op _k ( f)$. 

Define the normalized Hilbert-Schmidt norm of $T \in \End (V_k( \Si
))$  by $$ \| T \| _{\hs} :=  ( \dim (V_k( \Si )))^{-1} \tr
( T T^*) .$$ 
Then by theorem  \ref{the:Def} and \ref{the:TQFT}, for any $f \in
\Kauf (M, A)$, the Hilbert-Schmidt norm of $\Op_k (f)$ is estimated by
the $L^2$ norm of $g = \si (f)$: 
 $$ \| \Op _k ( f) \|^2_{HS} = \bigl( \operatorname{Vol}( \boM ) \bigr)^{-1}
\int_{\boM ^s} | g|^2 \mu + O ( k^{-1}).$$
The space of sequences $ (T_k) \in \prod _{k \geqslant 2} \End (V_k(
\Si ))$ with a bounded Hilbert-Schmidt norm has a natural filtration
$  O(0) \supset O(1) \supset  O(2) \supset \ldots$ where $(T_k)$ is in $O(\ell)$ if  $ \| T_k \| _{\hs} \leqslant
Ck^{-\ell}$ for some $C$. It follows from theorems \ref{the:Def} and
\ref{the:TQFT} that this filtration corresponds to the formal one:
$$ \bigl( \Op _k ( f) \bigr)_k \in  O ( \ell ) \Leftrightarrow f \in
\hb^{\ell} \Kauf (M, \Ah ), $$ 
for any $f \in
\Kauf (M, A)$. In particular, if $f \neq 0$, $ \Op _k ( f)$ does
not vanish when $k$ is sufficiently large.

Using this last observation, one may deduce that the action on $V_k (
\Si)$ of an
homeomorphism of $\Si$ provided by TQFT is asymptotically non trivial
as soon as the action of this homeomorphism on the Kauffman module is
non trivial, cf. \cite{mn} for details. 


\section{Dehn coordinates} \label{sec:dehn-coordinates}

Given $\Sigma$ an oriented compact surface with boundary, we will call {\it multicurve} a submanifold of $\Sigma$ which do not meet the boundary and has no component bounding a disc. The Dehn theorem classifies isotopy classes of multicurves by decomposing them into simple pieces: in these simple pieces, we will allow multicurves to meet the boundary transversally as we describe it now.

Let $w$ be the arc of points in $S^1$ whose angles are in $[-\pi/8,\pi/8]$. For every $m\in \N$ we fix a subset $W_m$ of size $m$ in $w$ invariant by complex conjugation.
Let $A=S^1\times [0,1]$ be an annulus and introduce a family $A(m,t)$ of multicurves of $A$ indexed by $\N\times \Z$. If $m$ is positive, $A(m,t)$ is 
the multicurve in $A$ (unique up to isotopy) whose projection on $[0,1]$ is a submersion, which intersects each boundary circle in $W_m$ and whose algebraic intersection with the curve $\{-1\}\times[0,1]$ equals $t$ (oriented from $0$ to $1$). An example is shown in the left hand side of Figure \ref{multi}. In the case
where $m=0$,  $A(m,t)$ consists of $t$ parallel copies of the
boundary. 

Let $T$ be the surface $\bigl \{z\in \C,\text{ s.t. }|z|\le 1\text{
  and }|z\pm\frac{1}{2}|>\frac{1}{4} \bigr\}.$ Choose identifications of the boundary circles of $T$ with $S^1$ such that $1\in S^1$ is identified respectively with $p_1=\frac{1}{4}$, $p_2=\frac{-1}{4}$ and $p_3=i$. We call $T$ the standard pair of pants (or trinion). 
Let $m_1,m_2,m_3$ be three non-negative integers with even
sum. Consider a multicurve as in the right hand side of Figure \ref{multi} which
intersects the boundary circles in $W_{m_1},W_{m_2},W_{m_3}$ via the identifications with $S^1$. Denote it by $C(m_1,m_2,m_3)$.

\begin{figure}[width=5cm,height=5cm]\label{multi}
\begin{center}
\begin{pspicture}(0,0)(0,8)
\includegraphics{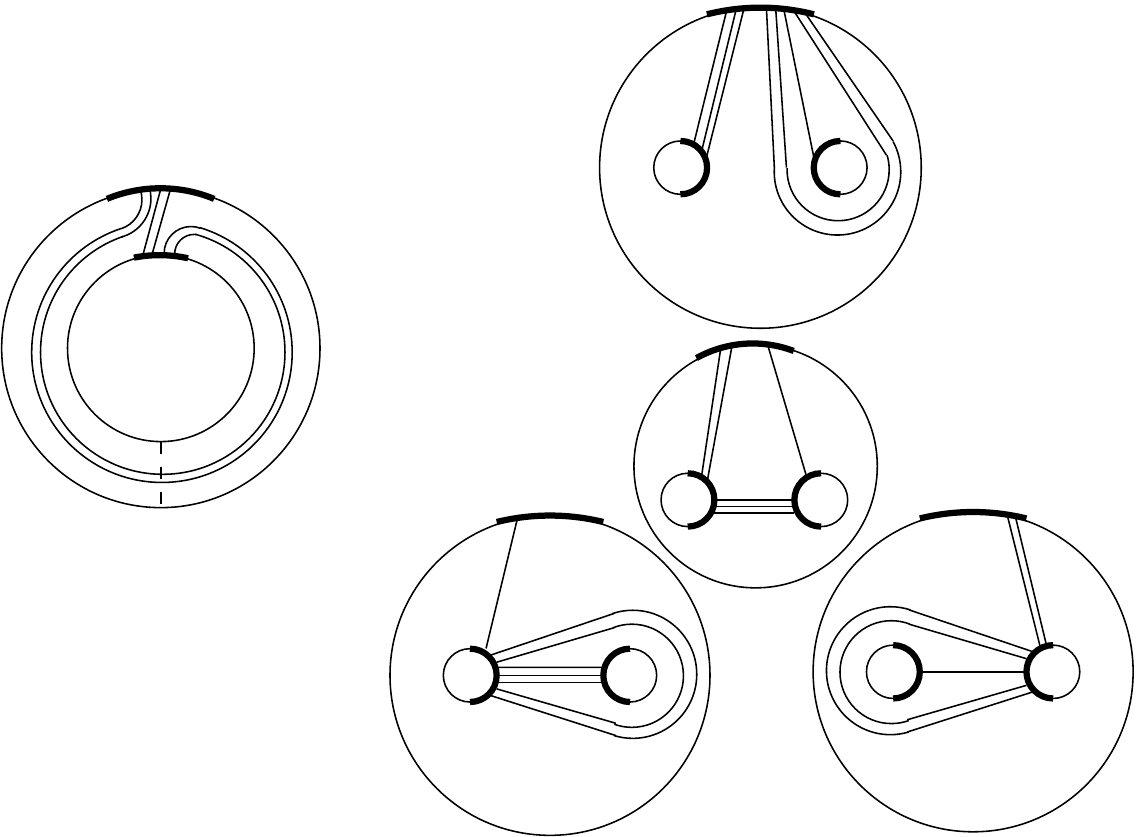}
\put(-10.5,2.7){$A(4,2)$}
\put(-6.7,3.9){$C(4,5,3)$}
\put(-8.7,0.2){$C(3,8,1)$}
\put(-4.3,0.2){$C(7,1,2)$}
\put(-6.5,5.3){$C(1,3,8)$}
\end{pspicture}
\caption{Examples of elementary multicurves}
\end{center}
\end{figure}

Consider now a general surface $\Sigma$ with negative Euler characteristic. From the
classification of surfaces, it appears that $\Sigma$ is obtained by
gluing trinions on boundary components. We will call {\it pant
  decomposition} of $\Sigma$ an homeomorphism 
  $$ \Phi : \Sigma
\rightarrow \Bigl( \bigcup_{i\in I}T_i \cup \bigcup_{j\in J} A_j\Bigr) /\phi$$
 where the $T_i$'s
are copies of $T$, the $A_j$'s are copies of $A$ and $\phi$ is a collection of homeomorphisms reversing the orientation between
boundary components of the $T_i$'s and the $A_j$'s. We ask that these
homeomorphisms reduce to either the identity or the complex
conjugation via the identifications of the boundary components with
$S^1$, so that they preserve the subsets $W_n$. We also ask that each boundary component of a copy of $T$ is glued to some copy of $A$. In that way, the components of the boundary of $\Sigma$ correspond to some copies of $A$.

We associate to a pants decomposition of $\Sigma$ the following graph $G$. Its vertex set is $I\cup \pi_0(\partial \Sigma)$. Edges are indexed by $J$ in such a way that the edge associated to the annulus $A_j$ connects the copies of $T$ or the boundary component of $\Sigma$ to which it is attached.
It happens that the vertices in $I$ are trivalent and the vertices in
$\pi_0(\partial \Sigma)$ are univalent. An edge is said {\it internal} if
it connects trivalent vertices, otherwise it is said {\it external}.

Consider now a surface with a pants decomposition with graph $G$. We will call {\it Dehn parameter} a pair  $(m,t)$ where $m$ is a map from the edges of $G$ to $\N$ and $t$ is a map from the edges of $G$ to $\Z$ satisfying the following conditions:
\begin{itemize}
\item[-] if $j_1,j_2,j_3$ are three edges incoming to the same vertex, then $m_{j_1}+m_{j_2}+m_{j_3}$ is even.
\item[-] for all edges $j$, if $m_j=0$ then $t_j\ge 0$.
\item[-] all external edges $j$ satisfy $m_j=0$.
\end{itemize}

Given such a Dehn parameter, one can construct a multicurve on $\Sigma$ by gluing elementary multicurves in the following way: define $C(m,t)$ as the union $\bigcup_{i}C(m^i_{1},m^i_{2},m^i_{3})\cup \bigcup _{j}A(m_j,t_j)/\phi$. In this expression,  $m^i_1,m^i_2,m^i_3$ are the values of $m$ at edges adjacent to $i$.

The classification theorem of Dehn is the following
\begin{theo}\label{dehn}

Let $\Sigma$ be a surface with pants decomposition.
The map sending a Dehn parameter $(m,t)$ to the multicurve $C(m,t)$  is a bijection between the set of Dehn parameters and the set of isotopy classes of
multicurves in $\Sigma$.
\end{theo}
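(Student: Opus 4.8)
The plan is to reduce the global statement to two local classification facts — one for the trinion $T$, one for the annulus $A$ — and then glue them using minimal position with respect to the pants curves. Write $c_j$ for the core of the annulus $A_j$. The first local fact is that a properly embedded compact $1$-submanifold of $T$ with no trivial component (an arc cutting off a disc, or a closed curve bounding a disc) meeting the three boundary circles in $m_1,m_2,m_3$ points is isotopic, through such submanifolds and allowing the boundary points to slide within the $W$'s, to $C(m_1,m_2,m_3)$, and that the latter is determined by $(m_1,m_2,m_3)$. Indeed the essential arcs of $T$ are of exactly three ``connecting'' and three ``returning'' types; at most one returning type can occur on an embedded curve, so the number of arcs of each type is forced by $(m_1,m_2,m_3)$, the even-sum condition guaranteeing integrality, and disjoint arcs of prescribed types have a unique embedded realization up to isotopy. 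The second local fact is the tautological classification of multicurves in $A$ by $(m,t)$ built into the definition of $A(m,t)$: for $m>0$ the class rel boundary is pinned down by the algebraic intersection $t$ with a transverse arc, and for $m=0$ by the number $t\ge 0$ of parallel copies of the core.

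For surjectivity, I would take an arbitrary multicurve $\gamma$ and isotope it into minimal position with respect to the whole family $(c_j)$, so that it realises the geometric intersection numbers and has no bigon with any $c_j$; set $m_j=|\gamma\cap c_j|$. Cutting $\Sigma$ along the $c_j$ returns the trinions $T_i$ and the annuli $A_j$. Minimal position forces the restriction of $\gamma$ to each trinion to have no trivial arc, since such an arc would cut off a disc and hence create a bigon with the adjacent $c_j$. The restriction to $T_i$ is therefore isotopic to $C(m^i_1,m^i_2,m^i_3)$ by the first local fact, the even-sum condition holding because a compact $1$-manifold has an even number of endpoints; the restriction to $A_j$ is isotopic to some $A(m_j,t_j)$ by the second, which defines $t_j$ (closed components parallel to $c_j$ being recorded in $A_j$ with $m_j=0$). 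Reassembling along the gluings $\phi$, which preserve the $W$'s, shows $\gamma$ is isotopic to $C(m,t)$. The three defining conditions on $(m,t)$ are then immediate: parity from the trinion endpoint count, $t_j\ge 0$ when $m_j=0$ from the annular classification, and $m_j=0$ on external edges because $\gamma$ misses $\partial\Sigma$, so the $\partial\Sigma$--side of an external annulus carries no intersection point.

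For injectivity, suppose $C(m,t)$ and $C(m',t')$ are isotopic. Because $C(m,t)$ is built in minimal position with each $c_j$, the geometric intersection number $i(\gamma,c_j)$ is an isotopy invariant equal to $m_j$, so $m=m'$. With $m$ fixed, I would choose the isotopy to maintain minimal position with $(c_j)$ throughout, so that it preserves each $c_j$ up to isotopy and restricts to isotopies of the annular pieces compatible with the patterns on the pants. Since the trinion pieces are rigid, being determined by $m$ alone with no twisting freedom, the gluing parameter $t_j$ is read off as the algebraic intersection of $\gamma$ with a fixed arc crossing $c_j$, an invariant the isotopy cannot change without altering the geometric intersection number; hence $t=t'$.

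The main obstacle is precisely this injectivity of the twisting parameters: one must verify that an isotopy of the whole multicurve cannot secretly perform a Dehn twist about some $c_j$ and thereby shift $t_j$. This is where the rigidity of the trinion pieces is essential — it is what forbids twisting from being absorbed in the pants — and where minimal position must be preserved along the \emph{entire} isotopy, so that $t_j$ becomes a genuine algebraic-intersection invariant rather than a count defined only up to the mapping class group of the pants decomposition.
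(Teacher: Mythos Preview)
The paper does not actually prove this theorem: it simply cites \cite{flp} and remarks that only surjectivity is needed for the rest of the argument, since injectivity --- the implication $C(m,t)\simeq C(m',t')\Rightarrow (m,t)=(m',t')$ --- drops out \emph{a posteriori} from the linear independence of the trace functions $T_{C(m,t)}$ established in Theorem~\ref{independance}. So there is nothing to compare your argument against inside the paper itself; what one can do is compare your direct approach to the paper's indirect one.

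Your surjectivity argument is the standard one and is essentially complete: minimal position with respect to the family $(c_j)$, the bigon criterion to rule out trivial arcs in each trinion, the elementary classification of essential arc systems in $T$ by the triple $(m_1,m_2,m_3)$, and the annular twist count. This is exactly the content of the relevant expos\'e in \cite{flp}, and it is all the paper requires.

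Your injectivity argument, however, contains the gap you yourself flag, and your proposed fix does not close it. The sentence ``choose the isotopy to maintain minimal position with $(c_j)$ throughout'' is not something you can simply decree. Even granting that two isotopic multicurves, each in minimal position, are isotopic \emph{through} multicurves in minimal position (itself a nontrivial lemma), such an isotopy is allowed to move the intersection points of $\gamma$ with $c_j$ around the circle $c_j$. Once those points move, the algebraic intersection of $\gamma\cap A_j$ with the reference arc $\{-1\}\times[0,1]$ is no longer invariant under the restricted isotopy, because that isotopy is not rel boundary in $A_j$. Your appeal to the ``rigidity of the trinion pieces'' is where the real work lies: one has to show that an isotopy of $C(m^i_1,m^i_2,m^i_3)$ inside $T_i$ which returns each boundary point set $W_{m^i_k}$ to itself actually returns each point to itself (equivalently, that the restriction to $\partial T_i$ is isotopic to the identity rel the marked points), and this is false in general for the mapping class group of $T$ --- half-twists about boundary components act nontrivially. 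What saves the day in the standard proof is a careful bookkeeping using a fixed system of transverse arcs in $\Sigma$ and the hexagon/rectangle decomposition, not an abstract rigidity principle; you have not supplied this.

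The paper's route sidesteps all of this: by exhibiting for each $(m,t)$ a leading Fourier coefficient of $T_{C(m,t)}$ (Lemmas~\ref{lem:fractional-dehn-twist} and~\ref{lem:non-vanishing}) that depends injectively on $(m,t)$, it recovers the Dehn parameters from the function $T_{C(m,t)}$ on $\boM(\Sigma)$, which is manifestly an isotopy invariant. That is a genuinely different, and arguably cleaner, proof of injectivity than the combinatorial one you are sketching.
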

We refer to \cite{flp} for the proof. In the sequel we only need to know that each multicurve of $\Si$ is isotopic to some $C(m,t)$. The fact that $C(m,t)$ is isotopic
to $C(m',t')$ only if $m=m'$ and $t = t'$ is a consequence of our
results.

\section{Moduli spaces and torus actions} \label{sec:torus-action}

Let $\Sigma$ be an oriented compact surface with boundary. 
Let $\boM ( \Si )$ be the set of isomorphism classes of pairs
$(E, \te)$ where $E$ is a flat Hermitian
bundle of rank 2 over $\Si$ and $\te $ is a flat unitary section
of $\wedge ^2 E$. An isomorphism between two pairs is an
isomorphism of flat Hermitian bundles commuting with the volume
sections. The holonomy representation of the fundamental group $\pi$
induces an isomorphism
$$ \boM ( \Si ) \rightarrow \Hom ( \pi, \su ) / \su $$
To any pants decomposition is associated a torus action on
a dense open subset of this moduli space together with a set of invariant functions which separate the
orbits.

Let $C$ be a simple curve (embedded circle) of $\Si$. Choose an orientation of $C$ and a
base point $x \in C$. Then for any $(E, \te) \in \boM (\Si)$, the
holonomy at $x$ of $E |_C$ is a unitary automorphism $g_x$
of $E_{x}$ preserving $\te_{x}$. So the trace of
$g_x$ belongs to $[-2,2]$. We set
$$ a_{C} (E, \te ) :=   \acos \bigl( \tfrac{1}{2}\tr \, (g_x)
\bigr) \in [0, \pi ].$$ 
This define a function $a_C$ on the moduli space $\boM ( \Si)$ which does not depend on the choices of
the base point and the orientation. 

Let us introduce a circle action on the
subset of $\boM ( \Si )$ consisting of bundles with a
non-central holonomy along $C$.  Let $t
\in \tore := \R / \Z $. To define $t. (E,\te)$, we assume that $C$ is
oriented. Then $E |_C$ is the
direct sum of two subbundles $E^{\pm}$, whose fibers are the eigenspaces of the
holonomy along $C$ with eigenvalues $\exp( \pm i a_{C} (E))$. Let $R_t$
be the automorphism of $E |_C$ which acts on $E^{\pm}$ by
multiplication by $\exp ( \pm 2i \pi t)$.

Next consider the surface $\Si'$ obtained from $\Si$ by cutting out
$C$. Denote by $\pi$ the projection from $\Si'$ to $\Si$. Let $C_+$
and $C_-$ be the boundary components of $\Si'$ such that $ \pi$
restricts to diffeomorphisms $\pi_\pm : C_\pm \rightarrow C$ and which
respectively preserves and reverses the orientation. 
Then $t.(E, \te)$ is the quotient of $\pi^* E$ under the identification
$$ u \sim \pi_-^*. R_t .(\pi_+^* )^{-1} (u), \qquad u \in \pi^*E |_{C_+}
= \pi_+^* (E|_C) $$
This definition does not depend on the orientation of $C$. 

Let $\Courbe$ be a
set of simple disjoint curves. Then the corresponding actions commute and
we obtain an action of the torus $\tore^\Courbe$ on the set  $\boM ^\circ
( \Sigma ) $ of bundles  
 with non-central holonomy along any curve of $\Courbe$.  
Assume that the curves of $\Courbe$ cut $\Si$ into trinions. 
As in section
 \ref{sec:dehn-coordinates} we define a graph $G$ whose edges are
 naturally indexed by $J:= \Courbe \cup
\pi_0 ( \partial \Si)$.  Introduce the map $$a : \boM ( \Si )
\rightarrow \R^{J}$$ whose coordinates are the $a_{C}$'s.

\begin{theo} \label{theo:image_moment}
The image of $a$ is the polyhedron $\De$ consisting of the $ (\al_j) \in
\R^{J}$ such that for any trivalent vertex $v$ of $G$ 
$$ | \al_{i} - \al_{j} | \leqslant \al_{k} \leqslant \min (
\al_{i} + \al_{j},  
2\pi - ( \al_{i} + \al_{j} ) ) 
$$
if $i$,$j$ and $k$ are the edges incident to $v$. 
The fibers of the restriction of $a$ to  $\boM^\circ ( \Si)$ are the
orbits of the action of $\tore^\Courbe$.
Furthermore the
action is locally free on the preimage of the interior of $\De$. 

\end{theo}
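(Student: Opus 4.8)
The plan is to relate the functions $a_C$ to a moment map for the torus action and then analyze the fibers directly via holonomy data. First I would fix a pants decomposition and observe that the space $\boM(\Si)$ can be reconstructed from the holonomies around the curves of $\Courbe$ together with the gluing data across each curve. Concretely, cutting along all curves in $\Courbe$ decomposes $\Si$ into trinions $T_i$; a point of $\boM(\Si)$ is determined by a compatible collection of holonomy representations of the trinion groups together with identifications along the cut circles. On each trinion the fundamental group is free on two generators, and the holonomies around its three boundary circles $x_1, x_2, x_3$ satisfy $x_1 x_2 x_3 = 1$ (for a suitable orientation convention). The three numbers $\al_i = \acos(\tfrac12 \tr g_{x_i}) \in [0,\pi]$ are then the conjugacy-class data of three elements of $\su$ whose product is $1$.

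\medskip

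The first main step is the trinion inequality: given three conjugacy classes in $\su$ parametrized by angles $\al_i \in [0,\pi]$, there exist $A, B, C \in \su$ in these classes with $ABC = 1$ if and only if the triangle-type inequalities
\begin{gather*}
  | \al_i - \al_j | \leqslant \al_k \leqslant \min ( \al_i + \al_j, 2\pi - ( \al_i + \al_j ) )
\end{gather*}
hold. This is a classical fact about $\su$ (the "quantum Clebsch--Gordan" or spherical-triangle condition); I would prove it by reducing to the question of which $2\times 2$ unitary $C^{-1}$ can be written as a product $AB$ with $A, B$ in fixed conjugacy classes, which amounts to computing the range of $\tr(AB)$ as $A, B$ range over their classes. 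Since $\su \cong S^3$ and conjugacy classes are latitude spheres, this range is an explicit interval, giving exactly the stated bounds. Taking the product over all trinions and matching the boundary angles along each curve of $\Courbe$ shows that $a(\boM(\Si)) \subseteq \De$ and, conversely, that every point of $\De$ is attained — for the converse one builds representations trinion by trinion, which is possible precisely because the inequalities are satisfied.

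\medskip

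The second main step identifies the fibers of $a$ over $\boM^\circ(\Si)$ with the $\tore^\Courbe$-orbits. For a bundle with non-central holonomy along each $C \in \Courbe$, the eigenvalues $\exp(\pm i a_C)$ are distinct, so the eigenline decomposition $E^{\pm}$ is canonically defined; the circle action $R_t$ is precisely the gauge freedom in the gluing identification along $C$ that preserves this decomposition. Fixing the values $a_C$ for all $C$ fixes the conjugacy classes of all boundary holonomies on each trinion. Because an irreducible (non-central) representation of the free group on two generators is rigid up to conjugation once the conjugacy classes of the three boundary curves are prescribed — the trinion group has no continuous moduli beyond these angles — the only remaining freedom in reassembling the global bundle from the trinion pieces is the choice of gluing phase along each interior curve, which is exactly the $\tore^\Courbe$-action. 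Hence two bundles with the same $a$-values differ by an element of $\tore^\Courbe$, proving that fibers are orbits.

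\medskip

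For local freeness over the interior of $\De$, I would show that the stabilizer of the $\tore^\Courbe$-action is finite there. An element $t \in \tore^\Courbe$ fixing $(E,\te)$ must act trivially on the reassembly data, which forces, at each interior curve, that the rotation $R_{t_C}$ either be central or commute with the relevant trinion holonomies; over the open stratum where all angles $\al_j$ are strictly interior, the holonomies of adjacent trinions are generic enough (irreducible) that this can happen only for $t_C$ in a finite subgroup. I expect the main obstacle to be exactly this last point: controlling the stabilizer requires showing that at an interior point the two trinions glued along $C$ do not share extra symmetry, i.e. that the pair of boundary holonomies on each side generates an irreducible subgroup so that no nontrivial continuous rotation centralizes both sides simultaneously. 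Establishing irreducibility uniformly on the interior stratum, rather than just generically, is the delicate step, and I would handle it by a dimension count comparing $\dim \boM(\Si) = 2\,\#(\text{interior edges})$ with the dimension of $\De$ together with the torus rank.
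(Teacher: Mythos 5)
Your proposal follows essentially the same route as the paper, which does not write out a proof but defers to Jeffrey--Weitsman (Proposition 3.1 of [JW92]): establish the spherical triangle inequalities for a single trinion, then paste trinion representations together, identifying the fiber of $a$ over a point of $\boM^\circ(\Si)$ with the regluing freedom along the cut curves, which is exactly a $\tore^{\Courbe}$-torsor. The one place your plan would not go through as written is the final step: a dimension count can only give irreducibility of the trinion pieces \emph{generically}, not uniformly on $a^{-1}(\operatorname{Int}\De)$; but no such count is needed, since a reducible $\su$-representation of a trinion group is simultaneously diagonalizable, forcing $\ep_1 \al_i + \ep_2 \al_j + \ep_3 \al_k \equiv 0 \pmod{2\pi}$ for some choice of signs, i.e.\ the angles lie on the boundary of the local polytope --- hence over the interior every trinion representation is irreducible, its automorphisms are $\pm \id$, and the stabilizer of the torus action is contained in $(\Z/2\Z)^{\Courbe}$.
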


This theorem has been proved in \cite{bohr} under the assumption that $\Sigma$
has no boundary. There is no difficulty to generalize to the case where $\partial \Si \neq
\emptyset$. One shows first the result for a pair of pants $\Si$,
cf. proposition 3.1 of \cite{bohr}. The general case follows by
analyzing how one can paste flat $\su$ bundles on the trinions cut
out by $\Courbe$ to a global bundle on $\Si$.

When $\Si$ has no boundary, the open set of irreducible classes $(E,\te)$ is naturally a
symplectic manifold, as we already mentioned it in section
\ref{sec:poisson}. Then for any simple curve $C$, the function $a_C(E)$ is a moment of
the circle action corresponding to $C$, see \cite{bohr} proposition
5.4.

It is easily seen that $\Delta$ has a non-empty interior. Furthermore the
restrictions to $\operatorname{Int}  (\De)$ of the functions
$$ \exp \Bigl( i \sum_{j \in J} \ell_j a_j  \Bigr), \qquad (\ell_j) \in
\Z^{J }  $$
are linearly independent. This will be used in the proof of theorem \ref{independance}. 
 The fact that $\Delta$ has a non-empty interior will also ensure that some
Fourier coefficients do not vanish, cf. lemma \ref{lem:non-vanishing}.

\section{Fourier decomposition}

We are concern in this part on the Fourier decomposition on some trace
functions defined on the previous moduli spaces. 
Consider a disjoint union $\Ga$ of unoriented circles and a map $h : \Ga
\rightarrow \Si$. Then define the following function on $\boM ( \Si )$
$$ \tf _{h} (E,\te) = \prod _{\ga \in \pi_0 ( \Ga) } ( - \tr
( g_{\ga})) $$
where $g_\ga$ is the holonomy of $E$ along $\ga$. 
Let $\Courbe$ be a finite family of disjoint simple curves. Then for
any $k \in \Z ^ \Courbe $, the
$k$-th isotype of $\tf_h$ for the action of the torus $\tore^ \Courbe$ is
the function  
$$ \Pi_k ( \tf _h) (x)  = \int_{\tore ^{\Courbe} } \tf_h ( t.x) e^{-2
  \pi i \la t , k \ra } \; dt $$
The first  step in the computation of $ \Pi_k ( \tf _h)$ is to extend
the definition of $T_h ( E, \te)$ to the case  where $\Ga$ is a
disjoint union of circles and closed intervals.  

\subsection{Generalized holonomy trace} \label{sec:gener-holon-trace}

Let us start with an algebraic preliminary. 
Consider a finite family $(V_i)_{i \in I}$ of vector spaces. Let us
define the tensor product $\otimes_{i \in I} V_i$ assuming that the $V_i$ are odd
superspaces. First, for any
bijections $\si$ and $\si'$ from $ \{ 1, \ldots , n \}$ to $I$, one has
a commutation map 
$$ c_{\si, \si'} : V_{\si (1) } \otimes \ldots \otimes V_{\si (n)} \rightarrow V_{\si
  '  (1) } \otimes \ldots \otimes V_{\si '(n)}$$
sending $v_{1} \otimes \ldots \otimes v_{n}$ to $ ( -1) ^{\epsilon
  (\al)} v_{\al (1)} \otimes
\ldots \otimes v_{\al (n)}$ where $\al = \si ' \circ \si^{-1}$. Since
$c_{ \si', \si''} \circ c_{\si, \si'} = c_{ \si, \si''}$, one can
identify coherently these tensor products by taking the projective limit of this system, which defines the unordered
tensor product. More precisely, we define $ \otimes _{i \in I} V_i$ as
the subspace of the product  
$$ \prod_{\si \in \operatorname{bij}( \{ 1, \ldots, n \} , I)  } \bigl( V_{\si(1)} \otimes \ldots \otimes V_{\si (n)} \bigr)$$
consisting of the families $ (v_\si) $ such that $ c_{\si, \si'} (
v_{\si} ) = v_{\si '}$. 

 Let $E \rightarrow \Si$ be a flat vector bundle of rank 2 and $\te$ a
non-vanishing flat section of $\wedge ^2 E$. Observe that each
fiber $E_x$ has a complex symplectic product $\om_x$ defined in such a way
that for all $u$, $v \in E_x$,
$$ u \wedge v = \te_x \Rightarrow \om_x (u,v) = 1.$$

Consider a
1-dimensional compact manifold $\Ga$ and a continuous function $h :
\Ga \rightarrow \Si$. We will define the generalized holonomy trace of $h$
as a vector 
$$ \tf _h (E, \te) \in  \bigotimes_{ p \in \partial \Ga } E_{h(p)}   $$ 
If $\Ga$ is an interval, we choose an orientation of it and denote by $p$
and $q$ the source and target point of $\Ga$. Then the holonomy
along $\Ga$ is an homomorphism  $$ A \in \Hom (E_{h(p)} , E_{h (q)}) \simeq
  E_{h(q)} \otimes E^*_{h(p)} .$$ 
Identifying $E_{h(p)}$ with
  $E^*_{h(p)}$ by the map sending $u$ to $\om_{h(p)} (u, \cdot)$, we
  obtain a vector $A_\flat \in E_{h(q)} \otimes E_{h(p)}$. We set $$\tf_h
  (E, \te) := A_\flat.$$ 
  The important point is that this definition
  does not depend of the orientation if we use the previous
  identification between $ E_{h(p)} \otimes E_{h(q)}$ and $E_{h(q)}
  \otimes E_{h(p)}$. Indeed with this identification, we have that
  $(A^{-1})_\flat = A_\flat$. This is easily deduced from the fact
  that  $A$ is a linear symplectomorphism.
 If $\Ga$ is a circle, we define $\tf_{h} ( E, \te)$ as previously as
the opposite of the trace of the holonomy of $h$ (the super-trace). 
Finally, if $\Ga$ has several components $(\Ga_i)$, we define $\tf_h(E, \te)$ as
the tensor product of the $\tf_{h|_{\Ga_i}} ( E, \te)$.

Now let $h':\Ga'\to \Si$ be a continuous map and $p$ and $q$ be two distinct points in $\partial \Ga'$ such that $h'(p) = h'(q)$. Then by identifying $p$ with $q$, we get a compact 1-manifold $\Ga$ with a map $h :\Ga \rightarrow \Si$. It is
easily checked that 
$$ \tf_{h} (E, \te) = C(\tf_{h'} (  E, \te))$$ where $C$
is the contraction 
$$ \bigotimes _{r \in \partial \Ga' } E_{h(r)} \longrightarrow \bigotimes
_{r \in \partial \Ga' \setminus \{ p, q \} } E_{h(r)}$$ 
sending $ u \otimes v \otimes w $ to $u.\om (v,w)$ if $w \in
E_{h(p)}$, $v \in E_{h(q)}$ and $u \in \bigotimes\limits
_{r \in \partial \Ga' \setminus \{ p, q \} } E_{h(r)}$.

\subsection{The basic computation} \label{sec:basic-computation}

In this part we consider the case of a single simple curve $C$ of $\Si$ with its associated circle 
action. Let $\Ga$ be a finite disjoint union of circles and $h : \Ga
\rightarrow \Si$ be a map intersecting $C$ transversally. 

Let $\Ga'$ be the compact 1-manifold obtained by cutting out the points $p$ of $\Gamma$ such that  $h(p)$ is on $C$ and let $h'$ be the obvious map $\Ga' \rightarrow
\Si$. Then it follows from the considerations of section
\ref{sec:gener-holon-trace} that $\tf_h (E, \te)$ is the contraction of the generalized trace
holonomy $\tf_{h'} (E, \te)$ by the linear map $$ C: \bigotimes_{p \in \partial \Ga'} E_{h'(p)}
\rightarrow \C$$ defined as follows. For any $p \in h^{-1} (C)$, let
$j_1(p)$ and $j_2 (p)$ be the corresponding boundary points of
$\Ga'$, so that 
$$  \bigotimes_{p \in \partial \Ga'} E_{h'(p)} = \bigotimes_{p \in
  h^{-1}(C)} E_{h(p)}^{\otimes \{ j_1 (p), j_2 (p)\} } $$
Then $C$ is the map sending $\bigotimes\limits_{ p \in h^{-1} (C)} (v_{p,1}
\otimes v_{p,2})$ to $ \prod\limits_{p\in h^{-1}(C)}\om_{h(p)} ( v_{p,1} , v_{p,2}).$ 

We will compute $\Pi_k (T_h)$ in terms of these generalized holonomy traces. Assume that $C$ is oriented and pick $(E,\te)$ such that its holonomy along $C$ is not central.  For any $p\in h^{-1} (C)$, we choose
two unitary eigenvectors $e^p_\pm \in E_{h(p)}$ of the holonomy along
$C$  with corresponding eigenvalues $\exp ( \pm i a_{C} (E))$ and  such
that $$\te _{h(p)} = e^p_+ \wedge e^p_- .$$ Assume also that $j_1(p)$ and
$j_2(p)$ are chosen in such a way that the tangent vector of $\Gamma$ oriented from $j_1$ to $j_2$ followed with the oriented tangent vector of $C$ form a direct basis in $\Si$.

\begin{lem} \label{lem:basic-computation}
Let $k \in \N$. If $k$ is bigger than the cardinal of $h^{-1}(C)$,
then $\Pi_k (T_h)$ and $\Pi_{-k}(T_h)$ vanish. If $k$ is equal to the
cardinal of $h^{-1} (C)$, then 
$$ \Pi_{\pm k} ( \tf_h ) (E, \te)  = C_{\pm} ( \tf_{h'} (E, \te)),$$
with $C_{\pm}$  the linear map from $ \bigotimes\limits_{p \in
  h^{-1}(C)} E_{h(p)}^{\otimes \{ j_1 (p), j_2 (p)\} }$ to $\C$ given
by
$$ C_{\pm} \bigl ( \bigotimes _{ p \in h^{-1} (C)} (v_{p,1}
\otimes v_{p,2} ) \bigr) = \prod _{p \in h^{-1} (C)} \la v_{p,1},
e^p_{\pm} \ra \la v_{p,2}, e^p_{\mp} \ra
$$
where the bracket denotes the scalar product. If $C$ does not
intersect $h$, $C_{\pm}$ is the
identity map.
\end{lem}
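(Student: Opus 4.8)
The plan is to concentrate all the $t$-dependence of $\tf_h(t.x)$ into the contraction map, and then read off the Fourier coefficients from the spectral decomposition of $R_t$. First I would observe that the circle action only alters the regluing of $\pi^*E$ along $C$ and leaves the bundle over $\Si\setminus C$ untouched. Since $\Ga'$ is obtained by cutting precisely at the points of $h^{-1}(C)$, the holonomies of the pieces of $\Ga'$ take place entirely in the cut surface, so the generalized holonomy trace $\tf_{h'}(t.(E,\te))$ is independent of $t$; it equals $\tf_{h'}(E,\te)$ once the fibers over $C_+$ and $C_-$ are identified with $E|_C$. Consequently $\tf_h(t.x) = C_t(\tf_{h'}(x))$, where $C_t$ is the contraction $C$ of the previous subsection with the regluing automorphism inserted at each $p\in h^{-1}(C)$: the local pairing $\om_{h(p)}(v_{p,1},v_{p,2})$ is replaced by $\om_{h(p)}(v_{p,1},R_t\,v_{p,2})$, the side on which $R_t$ acts and whether it is $R_t$ or its inverse being dictated by the orientation conventions fixing $\pi_\pm$ and the choice of $j_1(p),j_2(p)$.

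Next I would compute this twisted pairing at a single crossing. Because $R_t$ is diagonal in the unitary eigenbasis $e^p_\pm$ with eigenvalues $\exp(\pm 2\pi i t)$, and because $\om_{h(p)}(e^p_+,e^p_-)=1$ by the normalization $\te_{h(p)}=e^p_+\wedge e^p_-$, expanding $v_{p,1}$ and $v_{p,2}$ in this basis yields
$$\om_{h(p)}(v_{p,1},R_t\,v_{p,2}) = e^{-2\pi i t}\,\la v_{p,1},e^p_+\ra\,\la v_{p,2},e^p_-\ra - e^{2\pi i t}\,\la v_{p,1},e^p_-\ra\,\la v_{p,2},e^p_+\ra,$$
a Laurent polynomial in $z:=e^{2\pi i t}$ of degrees $+1$ and $-1$ only, whose two coefficients are (up to the sign $\om(e^p_-,e^p_+)=-1$) precisely the local factors appearing in $C_+$ and $C_-$.

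Then I would expand the product over all $p\in h^{-1}(C)$. Writing $N:=|h^{-1}(C)|$, the quantity $C_t(\tf_{h'}(x))$ is a Laurent polynomial $\sum_d c_d\,z^d$ supported in $-N\le d\le N$. Integrating against $e^{-2\pi i t k}$ extracts the coefficient $c_k$, so $\Pi_k(\tf_h)$ and $\Pi_{-k}(\tf_h)$ vanish as soon as $k>N$. For $k=N$ only the monomial $z^N$ contributes, which forces every crossing to take its $e^{2\pi i t}$ branch; collecting the corresponding local coefficients reproduces exactly $C_+(\tf_{h'}(x))$, and symmetrically $\Pi_{-N}$ gives $C_-(\tf_{h'}(x))$. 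The case where $C$ misses $h$ is the degenerate instance $N=0$: there $\Ga'=\Ga$, the map $C_t$ is the identity, and $\Pi_0(\tf_h)=\tf_h$.

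The main obstacle will be the sign bookkeeping rather than the analysis. One must track the super-signs coming from the odd unordered tensor product together with the orientation data (the direct-basis condition selecting $j_1(p),j_2(p)$ and the orientation of $C$ entering $R_t$) carefully enough to confirm that the $e^{+2\pi i t}$ branch matches $C_+$ and the $e^{-2\pi i t}$ branch matches $C_-$ with the correct overall sign, and that the factor $\om(e^p_-,e^p_+)=-1$ present at each crossing is absorbed rather than surviving as a spurious $(-1)^N$. Once the single-crossing computation is pinned down with the right conventions, the passage to the full product and the Fourier integral is routine.
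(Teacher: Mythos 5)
Your proposal is correct and follows essentially the same route as the paper: both write $\tf_h(t.x)=C_t(\tf_{h'}(x))$ with the regluing automorphism $R_t$ inserted into the symplectic pairing at each crossing, expand in the eigenbasis $e^p_\pm$ to get a degree-one Laurent polynomial in $e^{2\pi i t}$ per crossing, and extract the Fourier coefficients of the resulting product. The sign bookkeeping you flag is indeed the only delicate point, and the paper is equally terse about it.
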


\begin{proof} 
Assume first  that $h^{-1} (C)$ is reduced to a point. Then the proof is
based on the simple observation that 
\begin{gather} \label{eq:1} 
 \tf_h (t. (E, \te)) = C_t ( \tf_{h'}  (E, \te)), \qquad \forall \;
t \in \tore 
\end{gather}
where $C_t$ is the linear map from $E_{h(p)}^{\otimes \{ j_1 (p), j_2
  (p)\} }$ to $\C$ given by 
$$ C_t(v_1 \otimes v_2 ) = \om_{h(p)}  ( R_t v_1, v_2)$$
with $R_t$ the automorphism of $E_C$ entering in the definition of the
circle action. Next a straightforward computation leads to 
$$ \om_{h(p)} (R_t v_1, v_2) = e^{2i \pi t} \la v_1 , e_+^p \ra \la  v_2 ,e_-^p \ra + e^{-2i \pi t} \la v_1 , e_-^p \ra \la  v_2 , e_+^p \ra  $$
which proves the result, taking into account that the Hermitian product is $\C$-linear on the left. In the case where $h^{-1} (C)$ consists of
several points, (\ref{eq:1}) is still satisfied with the contraction 
$$ C_{t} \bigl ( \bigotimes\limits_{ p \in h^{-1} (C)} (v_{p,1}
\otimes v_{p,2} ) \bigr) = \prod _{p \in h^{-1} (C)} \om_{h(p)}  (
R_t v_{p,1}, v_{p,2} ) .$$
Replacing each term in the product by the previous formula leads to
the result.  \end{proof}

\subsection{Fractional Dehn twist}

Consider again a curve $C$ on $\Sigma$ and a map $h:\Ga\to\Si$ which is transverse to $C$ in $k$ points. One defines the {\it fractional Dehn twist} of $h$ along $C$ of order $\ell /k$ in the following way: let $\Gamma'$ be the 1-manifold $\Gamma$ obtained by cutting out the points $p$ such that $h(p)$ is on $C$. 
Let us orient $C$, then for any $p$ in $P=C\cap h(\Gamma)$, one
defines two points $j_1(p)$ and $j_2(p)$ in $\partial \Gamma'$ with
the same convention as in the section
\ref{sec:basic-computation}. Moreover, the orientation of $C$ provides
$P$ with a cyclic order, so $\Z / k \Z$ acts on $P$. One sets $$\Gamma_{\ell /k}=\Gamma'\cup
\coprod_{p \in P }[0,1]_p/\sim$$ where we identify $0_p$ with
$j_1(p)$ and $1_p$ with $j_2(p+\ell)$.

Choose a parameterization $\alpha:S^1 \to C$ respecting the orientation
such that $P$ corresponds to the set of $k$-th roots of unity. We
define the map $h_{\ell /k}:\Gamma_{\ell /k}\to \Sigma$ as being equal
to $h$ on $\Gamma'$ and such that for all $t\in [0,1]$ and $p\in P$
one has 
$$ h_{\ell /k}(t_p)=\alpha \bigl( \exp ( \tfrac{2i\pi \ell}{k}  t_p ) . \alpha^{-1} (p) \bigr) .$$
The manifold $\Gamma_{\ell /k}$ and the map $h_{\ell /k}$ do not
depend up to homotopy on the orientation of $C$ nor on the
parameterization $\alpha$.


\begin{lem} \label{lem:fractional-dehn-twist}
One has  $ \Pi_{\pm k}  ( \tf_{h_{\ell /k}} ) = (-1)^{\ell  ( k-1)} e^{ \pm i \ell  a_C}   \Pi_{\pm k}  ( \tf_{h} )$ 
\end{lem}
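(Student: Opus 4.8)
The plan is to relate the fractional Dehn twist $h_{\ell/k}$ back to the original map $h$ by computing the effect of the twist on the contraction maps $C_\pm$ from Lemma \ref{lem:basic-computation}. Since $h_{\ell/k}$ and $h$ agree on $\Gamma'$, the generalized holonomy trace $\tf_{h'}(E,\te)$ is the same object for both; the only difference lies in how the boundary points are reconnected and in the holonomy along the inserted arcs $[0,1]_p$. Applying Lemma \ref{lem:basic-computation} to $h_{\ell/k}$ (whose intersection with $C$ still has cardinal $k$), I would express $\Pi_{\pm k}(\tf_{h_{\ell/k}})$ as a contraction of $\tf_{h'}(E,\te)$ by a map $C_\pm^{\ell/k}$, and similarly $\Pi_{\pm k}(\tf_h)$ as a contraction by $C_\pm$. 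The goal is then purely linear-algebraic: to show $C_\pm^{\ell/k} = (-1)^{\ell(k-1)} e^{\pm i\ell a_C} C_\pm$ as functionals on $\bigotimes_{p} E_{h(p)}^{\otimes\{j_1(p),j_2(p)\}}$.

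The key computation is to track two effects of the twist. First, the inserted arc $[0,1]_p$ carries the holonomy of the path $\alpha(\exp(2i\pi\ell t/k)\cdot\alpha^{-1}(p))$, which is a fractional power of the holonomy along $C$; on the eigenbasis $e^p_\pm$ this arc acts by $\exp(\pm i\ell a_C/k \cdot \text{(something)})$, and accumulating these phases over all $k$ arcs should produce the factor $e^{\pm i\ell a_C}$. I would compute the holonomy of each inserted arc explicitly in the eigenbasis, using that $\alpha^{-1}(p)$ are $k$-th roots of unity and the arc sweeps an angle $2\pi\ell/k$ along $C$, so its holonomy contributes the eigenvalue $\exp(\pm i a_C \cdot \ell/k)$ on $e^p_\pm$; the product over the $k$ points gives $\exp(\pm i a_C \ell)$. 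Second, the reconnection rule $1_p \sim j_2(p+\ell)$ versus the direct rule $1_p\sim j_2(p)$ permutes the second factors $v_{p,2}$ by the cyclic shift $p\mapsto p+\ell$ on $\Z/k\Z$, which in the unordered super-tensor product introduces a Koszul sign. Since the $E_{h(p)}$ are odd superspaces, a cyclic permutation of $k$ odd objects by $\ell$ steps carries the sign $(-1)^{\ell(k-1)}$, which is exactly the stated prefactor.

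The main obstacle will be bookkeeping the signs precisely: the super-tensor-product conventions of section \ref{sec:gener-holon-trace}, the orientation conventions fixing $j_1(p)$ and $j_2(p)$, and the cyclic reordering induced by $p\mapsto p+\ell$ must all be handled coherently to confirm that the Koszul sign of the cyclic shift is $(-1)^{\ell(k-1)}$ rather than some other power. Concretely, shifting the pairing partner of each $v_{p,1}$ from $v_{p,2}$ to $v_{p+\ell,2}$ amounts to applying the $\ell$-fold cyclic permutation to the family $(v_{p,2})_{p\in\Z/k\Z}$ inside the contraction; I would isolate this permutation, compute its signature in the super setting (each transposition of two odd vectors contributes $-1$, and a single $k$-cycle decomposes into $k-1$ transpositions, so $\ell$ cycles give $(-1)^{\ell(k-1)}$), and verify it is independent of the choices of $j_1,j_2$. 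Once the phase factor and the sign are both pinned down, comparing the two contractions term by term via the explicit formula for $C_\pm$ in Lemma \ref{lem:basic-computation} yields the claimed identity, and since this holds for every $(E,\te)$ with non-central holonomy along $C$ (a dense set) it holds as an identity of functions.
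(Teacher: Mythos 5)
Your proposal is correct and follows essentially the same route as the paper: apply Lemma \ref{lem:basic-computation}, observe that the inserted arcs contribute the accumulated fractional holonomy $e^{\pm i\ell a_C}$ in the eigenbasis, and that the reconnection $1_p\sim j_2(p+\ell)$ contributes the sign of the shift $x\mapsto x+\ell$ on $\Z/k\Z$, namely $(-1)^{\ell(k-1)}$. The paper organizes the bookkeeping by comparing $H_{\ell/k}$ with $H_{0/k}$ in a trivialization along $P$ rather than by modifying the contraction functional, but this is only a cosmetic difference.
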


\begin{proof}
This is an easy application of the lemma \ref{lem:basic-computation} using the explicit description of the fractional Dehn twist above. More precisely, denote by $H_{\ell /k}$ the restriction of $h_{\ell /k}$ to $\coprod_{i=1}^k[0,1]_i$ and choose a parameterization $\alpha:S^1\to C$ as above. Given a bundle $E$ in $\boM(\Sigma)$ one can suppose that it is trivialized on $P$ in such a way that the holonomy from $p$ to $p+1$ is 
$$\mat{\exp(2i\pi a_C/k)& 0\\ 0 &\exp(-2i\pi a_C/k)}.$$
Then $H_{\ell /k}$ and $H_{0/k}$ differ by the factor $\exp(2i\pi a_C/k)^{k\ell }$ and the sign of the permutation sending $x$ to $x+\ell $ in $\Z/k\Z$.
\end{proof}

\subsection{Non vanishing of some isotypes} \label{sec:non-vanishing}

Consider a pants decomposition of $\Si$ in the sense of section
\ref{sec:dehn-coordinates} with graph $G$. Let $S^1 \times \{ 1/2\}$ be the core of the standard annulus $S^1
\times [0,1]$.  For each internal edges of $G$, we consider the core
of the corresponding annulus and the circle action it generates. This
defines a torus action
of $\tore ^{\Courbe}$ where $\Courbe$ is the set of internal
edges. 

Consider the multicurve $C(m,t)$ with Dehn coordinates $(m,t)$. By the
first part of Lemma \ref{lem:basic-computation}, the $k$-th isotype of
$T_{C(m,t)}$ vanishes if $|k_j| > m_j$ for some internal edge $j$.

\begin{lem} \label{lem:non-vanishing}
Let $k \in \Z^{\Courbe} $ be such that $|k_j| = m_j$ for all $j \in
\Courbe$. Then the function $\Pi_k (\tf _{C(m,t)})$ does not vanish on $a^{-1} (
  \operatorname{Int} (\De) ) $, where $a$ is the map introduced in
  section \ref{sec:torus-action} and $\Delta$ is the image of $ a$. 
\end{lem}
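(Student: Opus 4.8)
We need to prove that the $k$-th isotype $\Pi_k(\tf_{C(m,t)})$ of the trace function associated to a multicurve $C(m,t)$ is non-vanishing on $a^{-1}(\operatorname{Int}(\Delta))$, when $|k_j| = m_j$ for every internal edge $j \in \Courbe$. The multicurve $C(m,t)$ is built by gluing elementary pieces $C(m_1^i, m_2^i, m_3^i)$ on trinions and $A(m_j, t_j)$ on annuli, and it meets each internal curve $C_j$ of $\Courbe$ in exactly $m_j$ points transversally.

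Let me think about the strategy.

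**Setting up.** The torus $\tore^{\Courbe}$ acts, and we want the extreme Fourier coefficients $\Pi_{\pm m}(\tf_{C(m,t)})$ where the sign vector is $|k_j| = m_j$. By Lemma \ref{lem:basic-computation}, these extreme coefficients (where $|k_j|$ equals the intersection number) don't vanish in general — they're given by contracting the generalized holonomy trace $\tf_{h'}$ with the explicit map $C_{\pm}$ built from the eigenvectors $e^p_{\pm}$. So the computation reduces to a tensor-contraction over the cut pieces.

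**Key steps.**

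Step 1: Cut $\Sigma$ along all the internal curves $\Courbe$ to get the disjoint trinions $T_i$ (and annuli/boundary collars). The multicurve $C(m,t)$ gets cut into the elementary pieces $C(m_1^i, m_2^i, m_3^i)$ on each trinion plus the annular pieces $A(m_j, t_j)$. Correspondingly $\tf_{h'}(E,\theta)$ factors as a tensor product over the pieces, and the contraction $C_\pm$ factors as a product of local contractions at each intersection point.

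Step 2: Reduce to the computation on a single trinion. Because $C_{\pm}$ is a product over points $p \in h^{-1}(C)$ and the generalized holonomy trace is multiplicative over components, the global coefficient $\Pi_{\pm m}(\tf_{C(m,t)})$ factors (up to the fractional-twist phases from Lemma \ref{lem:fractional-dehn-twist} coming from the $A(m_j,t_j)$ pieces, which are nonzero scalars) into a product of local contributions associated to each trinion. Since a product of complex numbers is nonzero iff each factor is, it suffices to show the local contribution of each trinion $C(m_1, m_2, m_3)$ is nonzero at a point $x \in a^{-1}(\operatorname{Int}(\Delta))$.

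Step 3: Compute the trinion contribution. On a trinion the three boundary eigenvalues $e^{\pm i a_i}$ are determined by $(a_1, a_2, a_3) \in \operatorname{Int}(\Delta)$, meaning the strict trinion inequalities hold. Using Lemma \ref{lem:basic-computation}, the local contribution is a product over the intersection points of Hermitian pairings $\langle v_{p,1}, e^p_{\pm}\rangle \langle v_{p,2}, e^p_{\mp}\rangle$, where the vectors $v_{p,\cdot}$ are parallel-transported eigenvectors from the relevant boundary curves. This reduces to showing that a certain explicit product of inner products of eigenvectors of the three boundary holonomies does not vanish.

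**The main obstacle.** The heart of the matter — and where I expect the real work — is Step 3: verifying that the local trinion factor is genuinely nonzero for some (hence, since it's real-analytic and the interior is connected, generic) point in the interior of $\Delta$. The pairings $\langle e_\pm^p, e_\mp^q\rangle$ between eigenvectors of holonomies around different boundary curves vanish precisely when those eigenvectors are orthogonal, i.e., in degenerate (reducible or coaxial) configurations. The strict inequalities defining $\operatorname{Int}(\Delta)$ should be exactly the condition guaranteeing that the boundary holonomies of the trinion generate an irreducible representation, so that no such orthogonality degeneracy occurs. I would therefore parametrize the flat bundle on the trinion explicitly — choosing $\rho(t_1), \rho(t_2)$ as rotations by $a_1, a_2$ about distinct axes making the angle dictated by $a_3$ — and compute the relevant eigenvector pairings directly, exhibiting a point of $a^{-1}(\operatorname{Int}(\Delta))$ where every factor is nonzero. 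The nontrivial combinatorics is bookkeeping the matching of cut endpoints $j_1(p), j_2(p)$ across the gluing so that the tensor contractions close up correctly into a nonzero scalar; once the trinion computation is in hand, the global non-vanishing follows by multiplicativity.
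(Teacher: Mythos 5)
Your overall architecture matches the paper's: absorb the annular twists as nonzero phases via Lemma \ref{lem:fractional-dehn-twist} (equivalently, reduce to $t=0$), factor the extreme Fourier coefficient as a product of local contractions over the arcs of the cut multicurve, and then show each local factor is nonzero on $a^{-1}(\operatorname{Int}(\De))$. But Step 3, which you yourself identify as the heart of the matter, is not carried out, and the method you propose for it would not prove the lemma as stated. You plan to exhibit one point of $a^{-1}(\operatorname{Int}(\De))$ where the trinion factor is nonzero and then invoke real-analyticity and connectedness to get ``generic'' non-vanishing. That yields non-vanishing only on a dense subset, whereas the lemma asserts non-vanishing at \emph{every} point of $a^{-1}(\operatorname{Int}(\De))$: a real-analytic function that is not identically zero can still vanish at interior points. (The weaker generic statement would in fact still suffice for Theorem \ref{independance} by a continuity argument, but it is not what the lemma claims, and you would have to say so and adjust the downstream use.)

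The paper closes this gap with a pointwise contrapositive argument that your proposal is missing. Fix $(E,\te)$ with $a(E,\te) \in \operatorname{Int}(\De)$ and an arc $\ga$ of the cut multicurve lying in a trinion with boundary curves $j_1,j_2,j_3$, with endpoints on $j_1$ and $j_2$. If the local factor $C^{\ga}_k(\tf_{h'|_\ga}(E,\te))$ vanishes, then the holonomy along $\ga$ carries the eigenline decomposition at one endpoint to the eigenline decomposition at the other (possibly permuting the two lines); hence the holonomies of $j_1$ and $\ga^{-1} j_2 \ga$ commute, and since $j_3^{-1}$ is freely homotopic to $\ga^{-1}j_2\ga\, j_1$ one gets $\ep_1 a_{j_1}(E) + \ep_2 a_{j_2}(E) + \ep_3 a_{j_3}(E) \equiv 0 \bmod 2\pi$ for some signs $\ep_i$, which is precisely the equation of the boundary of $\De$ --- contradiction. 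There is also a second configuration you do not address: an arc whose two endpoints lie on the \emph{same} boundary curve of the trinion (as in $C(2,0,0)$); there vanishing of the factor forces the holonomy along a path joining $j_1$ to $j_2$ to preserve the eigenline decompositions, reducing to the previous case. Without these verifications, and with the analyticity shortcut in place of them, your argument establishes a strictly weaker statement than the lemma.
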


\begin{proof} 
By lemma \ref{lem:fractional-dehn-twist}, one may assume that $t$
vanishes. One can compute $\Pi_k (\tf _{C(m,0)})$ in exactly the same
way as we did in section \ref{sec:basic-computation}. The cut manifold $\Ga'$ is now a disjoint union of closed intervals with a map $h':\Ga'\to \Si$. 
For any oriented curve $C$ in $\Courbe$ and  point $p$ of $C$, one
considers a unitary eigenvector $e$ of $E_p$ for the holonomy along
$C$ whose eigenvalue has positive imaginary part. To any point $p$ in $\partial\Ga'$ such that $h'(p)\in C$, one associates the vector $e^p_+$ (resp. $e^p_-$) corresponding to the orientation of $C$ going to the left from $p$ (resp. to the right).

By generalizing lemma \ref{lem:basic-computation}, one obtains that 
$$ \Pi_{k} ( \tf_h ) (E, \te)  = C_{k} ( \tf_{h'} (E, \te)),$$
with $C_{k}$  a linear map from $ \bigotimes_{p \in
  \partial \Ga'} E_{h'(p)}$ to $\C$ of the
form 
$$ C_{k} \bigl ( \otimes _{ p \in \partial \Ga'} v_{p}  \bigr) =
 \prod _{p \in \partial \Ga'} \la v_{p}, e^p_{\epsilon(k,p)} \ra \mod \tore
$$ 
where $\epsilon(k,p)$ is the sign of $k_j$ if $p$ belongs to $j\in \Courbe$.
Since $\tf_{h'} (E, \te) $ is the tensor
product of the $\tf_{h'|_\ga} (E, \te)$ where $\ga$ runs overs the components of
$\Ga'$, one can rewrite the previous product as 
$$ \Pi_{k} ( \tf_h ) (E, \te)  = \prod _{\ga \in \pi_0 ( \Ga')} C^{\ga}_k
( \tf_{h'|_\ga} (E, \te)) \mod \tore$$
with $C^{\ga}_k(v_p \otimes v_q) = \la v_{p}, e^p_{\epsilon(k,p)} \ra \la v_{q}, e^q_{\epsilon(k,q)} \ra
$ if the source and target of $\ga$ are $p$ and $q$. 

To prove the lemma, it suffices to show that the factors $ C^{\ga}_k ( \tf_{h'|_\ga} (E, \te))$ do not vanish when $a (E, \te)$ belongs to
the interior of $\De$. There are two cases to consider, according to whether 
the endpoints of $\ga$ belong to the same separating curve or to
different ones, cf. for example $C(2,0,0)$ and $C(1,1,0)$. 

In the
second case, consider the pair of pants containing $\ga$ and let
$j_1$, $j_2$ and $j_3$ be the bounding curves with the induced
orientation. Assume that the
endpoints $p$ and $q$ of $\gamma$ belongs to $j_1$ and $j_2$ respectively. If $ C^{\ga}_k ( \tf_{h'|_\ga} (E, \te))$ vanishes, one
deduces from the definition of $\tf_{h'|_\ga} (E, \te)$ that the
holonomy along the loop $h' |_\ga$ sends the decomposition $\C e^p_{+}
\oplus \C e^p_{-}$ to the decomposition $\C e^q_{+} \oplus \C
e^q_{-}$, permuting possibly the summands. Choosing $p$ as a base point, one obtains that the
holonomies along $j_1$ and the concatenation $\ga^{-1}  j_2 \ga$
commute. Observe furthermore that $j_3^{-1}$ is freely isotopic to $\gamma^{-1}j_2\gamma j_1$. It follows that for some $\ep_i \in \{ 1, -1 \}$, the three eigenvalues $\exp(\ep_1 i a_{j_1}(E)),\exp(\ep_2 i a_{j_2}(E)),\exp(\ep_3 i a_{j_3}(E))$ have product equal to 1. This implies that 
$$ \ep_1 a_{j_1} (E ) + \ep_2 a_{j_2} (E) + \ep_3 a_{j_3} (E) \equiv 0 \mod 2 \pi
\Z$$  
This formula is only satisfied if $a (E, \te)$
belongs to the boundary of $\De$.

In the first case, $h'|_\ga$ connects two points $p$ and $q$ of the same
boundary circle $j_1$ by going around another boundary circle
$j_2$. Since the neighborhoods of the endpoints of $\ga$ are on the
same side of $j_1$, the eigenvalues of $e_p$ and $e_q$ are the
same. Using this, one shows that if $C^{\ga}_k(\tf_{h'|_\ga} (E, \te))$ vanishes,
then the holonomy along a path joining $j_1$ to $j_2$ preserves the
decomposition into eigenspaces. In other words, we are again in the
second case. 
\end{proof}

\section{Consequences} 

\subsection{Geometric intersection numbers} 

The
{\em geometric intersection number} of two isotopy classes of
multicurves $\xi$ and $\eta$ is the minimal number
of intersection points of a representant of $\xi$ with a representant
of $\eta$. 

If $\xi$ has only one connected component, it
generates a circle action on the  $\boM ( \Si )$. One may
characterize the geometric intersection number of $\xi$ and $\eta$ in
terms of this action and the trace function of $\eta$. 

\begin{theo} 
For any isotopy classes $\xi$ and $\eta$ of a curve and a multicurve
res\-pec\-ti\-ve\-ly, the geometric intersection number of $\xi$ and
$\eta$ is the biggest $k$ such that the $k$-th isotype of the trace function of
$\eta$ with respect to the circle action generated by $\xi$ does not vanish.
\end{theo}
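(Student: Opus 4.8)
The plan is to prove the two halves of the equality separately. Write $N$ for the geometric intersection number of $\xi$ and $\eta$, and write $\Pi_k^\xi$ for the $k$-th isotype with respect to the circle action generated by the single curve $\xi$. I would show that $\Pi_k^\xi(\tf_\eta)$ vanishes identically for $k > N$ and that $\Pi_N^\xi(\tf_\eta)$ does not vanish; together these say that the largest $k$ with $\Pi_k^\xi(\tf_\eta)\neq 0$ is exactly $N$. The crucial preliminary remark is that the trace function $\tf_\eta$ on $\boM(\Si)$ depends only on the isotopy class of $\eta$, hence so do its isotypes, so I am free to compute them with whatever representative is convenient.

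For the vanishing, I would realize $\eta$ by a map $h:\Ga\to\Si$ in minimal position with respect to $\xi$, so that $h$ is transverse to $\xi$ with exactly $N$ intersection points, i.e. $|h^{-1}(\xi)| = N$. The first assertion of Lemma \ref{lem:basic-computation} then gives $\Pi_{\pm k}(\tf_h)=0$ as soon as $k > |h^{-1}(\xi)| = N$. Since $\tf_h = \tf_\eta$, this is precisely $\Pi_k^\xi(\tf_\eta)=0$ for $k>N$.

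The non-vanishing at $k=N$ is the heart of the matter, and here I would pass to a pants decomposition. Choosing a pants decomposition of $\Si$ having the (non-peripheral) curve $\xi$ as one of its internal curves, let $\Courbe$ be the resulting set of internal curves and write $\eta = C(m,t)$ in the associated Dehn coordinates (Theorem \ref{dehn}). Since the standard position $C(m,t)$ meets each pants curve minimally, one has $m_\xi = N$. The $\xi$-circle is one factor of the torus $\tore^\Courbe$, so the $\tore^\xi$-isotype at $N$ is the sum of all $\tore^\Courbe$-isotypes whose $\xi$-coordinate equals $N$:
$$\Pi_N^\xi(\tf_\eta) = \sum_{K\in\Z^\Courbe,\; K_\xi = N} \Pi_K(\tf_\eta).$$
Because these summands transform under distinct characters of $\tore^\Courbe$, applying the projections $\Pi_K$ to the sum shows that it vanishes identically if and only if every $\Pi_K(\tf_\eta)$ does. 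Now take $K=m$: then $K_\xi = m_\xi = N$ and $|K_j| = m_j$ for all $j$, so Lemma \ref{lem:non-vanishing} says $\Pi_m(\tf_\eta)$ is nonzero on $a^{-1}(\operatorname{Int}(\De))$, a nonempty set since $\De$ has nonempty interior (and $a$ has image $\De$ by Theorem \ref{theo:image_moment}). Hence $\Pi_N^\xi(\tf_\eta)$ does not vanish. The peripheral case is degenerate: then $N=0$ and one only needs $\tf_\eta\neq 0$, which again follows from Lemma \ref{lem:non-vanishing} applied to the extremal isotype of any pants decomposition.

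The main obstacle is this non-vanishing step, and more precisely transferring the non-vanishing of the extremal \emph{full-torus} isotype supplied by Lemma \ref{lem:non-vanishing} to the \emph{single} circle action: a priori the isotype $\Pi_N^\xi(\tf_\eta)$ is a sum of many full-torus isotypes and could conceivably vanish through cancellation. The subtorus decomposition together with the linear independence of the distinct characters of $\tore^\Courbe$ is exactly what rules this out, isolating the one guaranteed nonzero term $\Pi_m(\tf_\eta)$. A secondary point needing care is the identification $m_\xi = N$, which rests on the fact that the standard Dehn representative $C(m,t)$ is in minimal position with respect to each curve of the pants decomposition.
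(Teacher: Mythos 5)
Your proposal is correct and follows essentially the same route as the paper: vanishing above $N$ via the first part of Lemma \ref{lem:basic-computation} applied to a minimal-position representative, and non-vanishing at $N$ by choosing a pants decomposition containing $\xi$, writing $\eta$ in Dehn coordinates, and invoking Lemma \ref{lem:non-vanishing}. The only differences are cosmetic: you spell out the subtorus step (that the single-circle isotype is a sum of full-torus isotypes distinguished by their characters), which the paper leaves implicit, and you take the minimality of the Dehn representative $C(m,t)$ as a known input, whereas the paper's argument actually recovers that fact as a by-product (combining $m_\xi\geqslant N$ with the vanishing of all isotypes above $N$ forces $m_\xi=N$ without assuming it).
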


\begin{proof} 
If $k$ is larger than the geometric intersection number, then the
$k$-th isotype vanishes by Lemma
\ref{lem:basic-computation}. Conversely, assume that $\xi$ does not
bound a disc and is not parallel to a boundary component, otherwise the
result is trivially satisfied. Then there is a pants decomposition
with a separating curve $C$ representing $\xi$. By Theorem
\ref{dehn}, $\eta$ may be represented by a Dehn multicurve
$C(m,t)$. Then lemma \ref{lem:non-vanishing} shows that the
$k$-isotype of $\tf_{C(m,t)}$ does not vanish if $k$ is the number of
intersection points of $C(m,t)$ with $C$.  
\end{proof}
The proof shows also that the intersection number is realized when the
representant of $\xi$ is a curve of a pants decomposition and the
representant of $\eta$ is one of the associated Dehn curves, a
well-known result. 

The following corollary has been proved in \cite{goldman2}. Denote by $\boM^s ( \Si)$  the subset of $\boM
( \Si)$ consisting of the irreducible bundles.

\begin{coro} Assume that $\Si$ is closed so that $\boM^s ( \Si)$
  is a symplectic manifold. Let $\xi$ be a curve and $\eta$ a multicurve in $\Si$. Then the Poisson bracket of the trace functions of $\xi$ and $\eta$ vanishes if and only if $\xi$ and $\eta$
admit non-intersecting representants. 
\end{coro}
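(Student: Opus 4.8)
The plan is to prove the corollary by combining the preceding theorem on geometric intersection numbers with the Goldman bracket formula. The key observation is that the vanishing of the Poisson bracket $\{ f_\xi, f_\eta \}$ should be detected at the level of the Fourier decomposition with respect to the circle action generated by $\xi$, since that action is Hamiltonian with moment map $a_\xi$ (as recalled after Theorem \ref{theo:image_moment}). Indeed, for any function $g$ on the symplectic manifold $\boM^s(\Si)$, the Poisson bracket $\{ a_\xi, g \}$ is, up to a constant, the derivative of $g$ along the Hamiltonian flow of $a_\xi$, which is exactly the circle action. In Fourier terms, $\Pi_k(\{ a_\xi, g \})$ is proportional to $k\, \Pi_k(g)$; hence $\{ a_\xi, g \} = 0$ precisely when all the nonzero isotypes $\Pi_k(g)$ vanish, i.e. when $g$ is invariant under the circle action.

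First I would reduce the statement about $\{ f_\xi, f_\eta \}$ to a statement about $a_\xi$. The Goldman formula (cited as \cite{goldman2}) expresses $\{ f_\xi, f_\eta \}$ as a signed sum over the intersection points of $\xi$ and $\eta$ of trace functions of the curves obtained by smoothing at those points. The clean way to exploit this is to relate the trace function $f_\xi = \chi_t$ of the single curve $\xi$ to its angle function $a_\xi$: one has $\chi_\xi = -\trace(g_\xi) = -2\cos(a_\xi)$ on $\boM^\circ$, so $f_\xi$ is a function of $a_\xi$ alone, and its Hamiltonian vector field is a nonvanishing multiple of that of $a_\xi$ away from the locus where $\sin(a_\xi)=0$. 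Consequently $\{ f_\xi, f_\eta \} = (2\sin a_\xi)\{ a_\xi, f_\eta \}$, so on the preimage of the interior of $\De$ — where $\sin a_\xi \neq 0$ — the bracket $\{ f_\xi, f_\eta \}$ vanishes if and only if $\{ a_\xi, f_\eta \}$ does, which by the moment-map discussion holds iff $f_\eta$ is invariant under the circle action of $\xi$, i.e. iff every nonzero isotype $\Pi_k(f_\eta)$ with $k \neq 0$ vanishes.

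Next I would invoke the intersection-number theorem just proved: the largest $k$ with $\Pi_k(f_\eta) \neq 0$ is exactly the geometric intersection number $i(\xi,\eta)$. Therefore all nonzero isotypes vanish precisely when $i(\xi,\eta)=0$, which by definition means $\xi$ and $\eta$ admit disjoint representatives. This chain gives the desired equivalence on the dense open set $a^{-1}(\operatorname{Int}\De)$, and since $\boM^s(\Si)$ and this open set share the relevant dense symplectic stratum, the vanishing of the smooth function $\{ f_\xi, f_\eta \}$ on the dense set extends to all of $\boM^s(\Si)$ by continuity. I would also dispose separately of the degenerate cases excluded earlier (when $\xi$ bounds a disc or is boundary-parallel), where both sides of the equivalence hold trivially.

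The main obstacle I anticipate is the precise relation between the Hamiltonian flow of $a_\xi$ and the Fourier decomposition: one must justify that $\Pi_k$ diagonalizes the infinitesimal generator of the circle action and that $\{ a_\xi, \cdot \}$ acts as $k$ (up to the normalization constant fixed by the symplectic form chosen in section \ref{sec:poisson}) on the $k$-th isotype. This requires that $a_\xi$ genuinely be the moment map for the $\tore$-action, which is guaranteed on $\boM^\circ$ by Theorem \ref{theo:image_moment} and proposition 5.4 of \cite{bohr} only where the action is locally free, i.e. over $\operatorname{Int}\De$; reconciling this with the behavior on the singular locus, and checking that the factor $2\sin a_\xi$ does not accidentally create spurious zeros of the bracket, is the delicate point. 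Everything else reduces to the already-established isotype computations and the cited Goldman formula.
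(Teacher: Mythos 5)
Your proposal is correct and follows essentially the same route as the paper: the paper's entire proof is the observation that $\arccos(-\tfrac{1}{2}T_\xi)=a_\xi$ is a moment map for the circle action generated by $\xi$, so that $\{T_\xi,T_\eta\}=2\sin(a_\xi)\{a_\xi,T_\eta\}$ vanishes exactly when $T_\eta$ is invariant under that action, which by the intersection-number theorem happens iff $i(\xi,\eta)=0$. You have simply spelled out the details (the chain rule, the identification of isotypes with the circle action, the density argument, the degenerate cases) that the paper leaves implicit.
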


This follows from the fact that the function $\arccos (- \frac{1}{2}
T_\xi)$ is a moment of the circle action generated by $\xi$.

\subsection{Independence of trace functions} 

One can now prove the following theorem:

\begin{theo}\label{independance}
The functions $T_{\xi}$, where $\xi$ runs over isotopy classes of
multicurves in $\Si$, are linearly independent.
\end{theo}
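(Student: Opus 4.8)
The plan is to prove linear independence by using the torus actions and the Fourier analysis developed in the previous sections to set up an inductive argument on the Dehn parameters of the multicurves. Suppose we have a nontrivial linear relation $\sum_{\xi} c_\xi \, \tf_\xi = 0$ where the sum runs over finitely many distinct isotopy classes of multicurves, with not all $c_\xi$ zero. After fixing a pants decomposition of $\Si$ with internal curves $\Courbe$, each multicurve appearing in the relation may be written uniquely as $C(m,t)$ by Theorem \ref{dehn}, and we obtain an action of the torus $\tore^{\Courbe}$ on $\boM^\circ(\Si)$. The strategy is to extract from the relation enough information, via the Fourier decomposition with respect to $\tore^{\Courbe}$, to force all the coefficients to vanish.

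First I would reduce to separating the multicurves by the $m$-part of their Dehn parameters. Among all the $C(m,t)$ occurring with $c_\xi \neq 0$, choose one whose $m = (m_j)_{j \in \Courbe}$ is maximal in a suitable sense, for instance maximal for the componentwise partial order, or maximal with respect to $\sum_j m_j$ breaking ties lexicographically; the point is to pick an $m$ that is not dominated by any other occurring $m'$. By the first part of Lemma \ref{lem:basic-computation} (as noted in section \ref{sec:non-vanishing}), the $k$-th isotype $\Pi_k(\tf_{C(m',t')})$ vanishes whenever $|k_j| > m'_j$ for some $j$. Thus if I apply the isotypic projection $\Pi_k$ for $k = (m_j)_{j \in \Courbe}$ (taking $k_j = m_j$, say all signs positive), then every term $C(m',t')$ with $m' \not\geq m$ in the relevant coordinate is killed, so only those multicurves with $m'_j = m_j$ for the active edges survive. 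This isolates the multicurves sharing the same maximal $m$.

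Next I would handle the remaining ambiguity in the $t$-coordinates. Having fixed $m$ and projected onto the top isotype $k$ with $|k_j| = m_j$, the surviving terms are the $C(m,t')$ for various $t'$, and Lemma \ref{lem:non-vanishing} guarantees that each such $\Pi_k(\tf_{C(m,t')})$ is a nonvanishing function on $a^{-1}(\operatorname{Int}(\De))$. By Lemma \ref{lem:fractional-dehn-twist}, changing $t'$ to $t$ multiplies the isotype by a factor of the form $(-1)^{\ell(m_j-1)} e^{\pm i \ell a_{C_j}}$; more precisely the different $t'$-values produce isotypes that differ by characters $\exp\bigl(i \sum_{j} \ell_j a_j\bigr)$ in the action-angle coordinates $a_j$. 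Since the restrictions to $\operatorname{Int}(\De)$ of these characters are linearly independent (the remark at the end of section \ref{sec:torus-action}), the coefficients $c_{C(m,t')}$ with distinct $t'$ cannot cancel, and they must all vanish individually. This contradicts the nonvanishing of the chosen coefficient.

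The main obstacle I anticipate is bookkeeping the dependence on the full vector of signs of $k$ and correctly translating the $t$-data into distinct characters of the torus, so that Lemma \ref{lem:fractional-dehn-twist} combined with the linear independence of the exponentials $\exp(i\sum_j \ell_j a_j)$ genuinely separates all the surviving $C(m,t')$. One must verify that twisting by the fractional Dehn twists accounts for every internal edge simultaneously and that the resulting phases $\sum_j \ell_j a_j$ are pairwise distinct as linear functionals in the $a_j$, which is where the hypothesis that $\Courbe$ cuts $\Si$ into trinions and that $\De$ has nonempty interior is essential. Once the separation of characters is established on the open dense set $a^{-1}(\operatorname{Int}(\De))$, continuity of the trace functions propagates the vanishing of each $c_\xi$ to all of $\boM(\Si)$, completing the argument.
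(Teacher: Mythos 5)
Your proposal follows essentially the same route as the paper's proof: choose a maximal multidegree $M$ among the occurring Dehn parameters, apply the isotypic projection $\Pi_M$ to kill every term with a different $m$ via the first part of Lemma \ref{lem:basic-computation} (the paper uses a lexicographic order, you a componentwise-maximal choice; both work), then use Lemma \ref{lem:fractional-dehn-twist} to write each surviving $\Pi_M(\tf_{C(M,t)})$ as $\Pi_M(\tf_{C(M,0)})$ times a function of the $a_j$, and conclude from Lemma \ref{lem:non-vanishing} together with the linear independence of those functions on $a^{-1}(\operatorname{Int}\De)$. The one detail you gloss over is that on edges with $M_j=0$ (in particular all external edges) the $t_j$-dependence enters through $(\pm 2\cos a_j)^{t_j}$ rather than a character $e^{i\ell_j a_j}$, so the separating functions are the mixed products $\prod_j\varphi_{M_j,t_j}(a_j)$ of the paper; their independence still reduces to that of the exponentials.
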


Since the isomorphism between $\boM (\Si)$ and $\Hom ( \pi, \su) /
\su$ identifies the function $T_\xi$ with the trace function
$f_{[\xi], \su}$, we deduce theorem \ref{the:result} for the group
$\su$. Consider the map $j$ 
$$ \Hom ( \pi, \su) /
\su \rightarrow \Hom ( \pi, \Sl) /
\Sl $$
Then $j^* f_{ \ga, \Sl} = f_{\ga, \su}$. So the independence of the $
f_{ \ga, \su}$ implies the independence of the $f_{\ga, \Sl}$, which
proves theorem \ref{the:result} for the group
$\Sl$.

\begin{proof}
Consider a pants decomposition of $\Si$ with associated graph $G$ and
the action of the torus $\tore^{\Courbe}$ as in section
\ref{sec:non-vanishing}. Let $J = \Courbe \cup \pi_0 ( \Si)$ be the
set of edges of $G$. By theorem \ref{dehn}, one may suppose that $\xi$ runs over the
multicurves $C(m,t)$ for admissible maps $m: J \to \N$ and $t: J
\to\Z$. 

 Let $\beta$ be a vanishing linear
combination of the $T_{\xi}$'s. Let $\beta=\sum_m \beta_m$ be its
decomposition with respect to the multi-degree $m$. Write
$$\beta_{m}=\sum_t \lambda_{m,t} T_{C(m,t)} .$$ 
Consider an arbitrary order on $J$ and the corresponding
lexicographical order on maps $m$.
Let $M$ be the maximal $m$ with a non-vanishing family of coefficients $(\lambda_{m,t})_t$. By
lemma \ref{lem:basic-computation}, for $m<M$ one has
$\Pi_{M}(\beta_m)=0$. Hence $\Pi_{M}(\beta_M)=\Pi_{M}(\beta) = 0.$
Furthermore, by lemma \ref{lem:fractional-dehn-twist}, 
\begin{alignat*}{2} 
\Pi_M(\beta_{M})= & \sum_{t} \lambda_{M,t} \Pi_M(T_{C(M,t)}) \\
= & \biggl( \sum_t \lambda_{M,t} \prod_{j \in J} \varphi_{M_j,t_j} (a_j) \biggr)
 \Pi_M(T_{C(M,0)})
\end{alignat*}
where for any integers $k, \ell$ and real $\al$, 
$$ \varphi_{k, \ell}  (\al) = \begin{cases} (-1) ^{\ell ( k-1)} \exp
  (i \ell \al) \text{ if } k \neq 0 \\ (2 \cos ( \al ))^{ \ell }
  \text{ otherwise} \end{cases}
$$
By lemma \ref{lem:non-vanishing}, the function $\Pi_M(T_{C(M,0)})$ does
not vanish on $a^{-1} ( \operatorname{Int}  \De)$. Furthermore, by the remark after
theorem \ref{theo:image_moment}, the functions $$\prod_{j \in J}
\varphi_{M_j, t_j} (a_j)$$ where $t$ runs over the maps $J \rightarrow
\Z$ such that $(M,t)$ are Dehn coordinates, are linearly independent
over $a^{-1}( \operatorname{Int} \De)$. So
the coefficients $\lambda_{M,t}$ vanish, contradicting the maximality of $M$.
\end{proof}

\section{On the character variety} \label{variety}

Denote by $V$ the vector space $\C^2$ and by $G$ the group
$\operatorname{Sl } (2, \C ) \subset \End (V)$. 
Let $\pi$ be a finitely generated group. Choosing generators $t_1, \ldots ,
t_n$ of $\pi$, we identify the set $\Hom ( \pi , G)$ of morphisms  with a closed algebraic
subset of $ (\End
V)^n$ by sending the morphism $\rho$ to $( \rho ( t_1), \ldots ,
\rho ( t_n) )$. This endows $\Hom ( \pi, G)$ with a structure of
affine variety. Its coordinate ring $\C[\Hom (\pi, G)]$ is the quotient of
$\C [ (\End V )^n ]$ by the ideal of polynomial functions vanishing on
$\Hom (\pi, G)$. 

The action by conjugation of $G$ on $\Hom (
\pi, G)$ is regular. By Hilbert theorem, the ring of invariant
functions is finitely generated. By definition $\C [ \Hom ( \pi , G) ] ^G$ is
the coordinate ring of the character variety. 

One may naturally identify this coordinate ring with a subspace of the
space  of complex valued functions on the quotient $\Hom ( \pi , G) /G$. 
By the following theorem, this subspace is the space $\Tf ( \pi, G)$
of trace functions.  
\begin{theo} 
The ring $\C [ \Hom ( \pi , G) ] ^G$ is generated as a vector space by the
functions $\chi_t$  
$$  \chi_ t ( [\rho]) = - \trace ( \rho (t)), \qquad \rho \in \Hom ( \pi,
G)$$ 
where $t$ runs over $\pi$. 
\end{theo}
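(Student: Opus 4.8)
The plan is to derive the statement from the first fundamental theorem of invariant theory for $2\times2$ matrices together with the trace identity (\ref{eq:trace}). Write $X=\Hom(\pi,G)$, regarded through the chosen generators $t_1,\dots,t_n$ as a closed subvariety of $(\End V)^n$; it is invariant under the conjugation action of $G$, and restriction of functions is a surjective $G$-equivariant algebra homomorphism $\C[(\End V)^n]\to\C[X]$. Each $\chi_t$ is a $G$-invariant element of $\C[X]$, so the content of the theorem is that these functions span $\C[X]^G$ over $\C$.

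First I would reduce to the ambient affine space. Since $G=\Sl$ is reductive, the functor $(-)^G$ of taking invariants is exact on rational $G$-modules; applying it to the surjection above shows that $\C[(\End V)^n]^G\to\C[X]^G$ is again surjective. Hence every invariant regular function on $X$ is the restriction of a $G$-invariant polynomial function on $(\End V)^n$. Note that the conjugation actions of $\Sl$ and of $\operatorname{Sl}$'s overgroup of units differ only by scalars, so this is the invariant ring to which the classical theorem applies.

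Next I would invoke the classical first fundamental theorem (Procesi, or Fricke--Vogt in the rank-two case): the algebra $\C[(\End V)^n]^G$ is generated as a $\C$-algebra by the functions $(A_1,\dots,A_n)\mapsto\trace(A_{i_1}\cdots A_{i_k})$ attached to monomials in the $A_i$. Restricting such a generator along $A_i\mapsto\rho(t_i)$ produces $[\rho]\mapsto\trace(\rho(t_{i_1}\cdots t_{i_k}))=-\chi_{t_{i_1}\cdots t_{i_k}}$, which is, up to sign, one of the functions $\chi_t$ with $t=t_{i_1}\cdots t_{i_k}\in\pi$. Combined with the previous step, this shows that $\C[X]^G$ is generated as an algebra by a subfamily of the $\chi_t$.

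It then remains to pass from algebra generation to vector-space spanning, which is where (\ref{eq:trace}) enters. Taking $a=\rho(s)$ and $b=\rho(t)$ in that identity gives $\chi_s\chi_t=-\chi_{st}-\chi_{s^{-1}t}$, so a product of two of the generating functions is again a linear combination of functions $\chi_u$ with $u\in\pi$. By induction on the number of factors, every product of the $\chi_t$, and hence every polynomial in them, is a $\C$-linear combination of the $\chi_t$ themselves; this is exactly the asserted spanning. The only substantial ingredient is the first fundamental theorem quoted in the third step, and that is the main obstacle; the reduction to the ambient space and the collapse of products are formal, the latter being the same computation that already exhibits $\Tf(\pi,G)$ as a subalgebra.
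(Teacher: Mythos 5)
Your proof is correct, and its skeleton --- reduce to the ambient space $(\End V)^n$, generate the ambient invariant ring by traces of words, then collapse all products into linear combinations of the $\chi_t$ via (\ref{eq:trace}) --- coincides with the paper's. The two genuine differences lie in how the middle is filled. For the reduction you invoke exactness of invariants for the reductive group $\Sl$; the paper instead extends an invariant regular function arbitrarily to a polynomial on $(\End V)^n$ and averages over the compact subgroup $\su$, which is the unitarian-trick version of the same fact --- both are fine. More substantially, for the generation of $\C[(\End V)^n]^G$ you quote the first fundamental theorem for matrix invariants (Procesi, Sibirskii) as a black box, whereas the paper deliberately avoids that citation and reproves the rank-two case from scratch: it identifies $\End V\simeq V^*\otimes V^*$ via the symplectic form on $V$, decomposes the polynomial ring into tensor powers, and uses Lemma \ref{lem:inv} (the decomposition of $\Mult_n$ into pieces built from $\om$-contractions) to conclude that all invariants are traces of words in the $a_i$ and their symplectic adjoints $a_i^*=a_i^{-1}$. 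Your route is shorter but imports a nontrivial classical theorem; the paper's is longer but self-contained, which is evidently the point of its Section \ref{variety} (``we provide an elementary proof''). A harmless discrepancy: your generators are traces of positive words in the $t_i$, while the paper's involve words in the $t_i^{\pm1}$; either family suffices, since repeated use of (\ref{eq:trace}) when collapsing products produces the $\chi_u$ for arbitrary $u\in\pi$ in both cases.
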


We provide an elementary proof of this well-known result. 
\begin{proof}
Because of the trace relation (\ref{eq:trace}), 
it suffices to prove that the ring of regular invariant functions is generated
by the $\chi_t$'s as an algebra. Then by averaging the
action of the
compact subgroup $\operatorname{SU} (2) \subset G$, we get that any
invariant regular function is represented by an invariant polynomial
function $P \in \C [ ( \End V)^n ]^G$. 

The symplectic form of $V$
induce an isomorphism between $V$ and $V^*$ and consequently  an
isomorphism between $\End V$ and $ \End V^*$. By
composing with the transposition map, we define an equivariant isomorphism  $a \rightarrow a^*$ of $\End (V)$. 
We will prove that $ \C [ (
\End V)^n ]^G$ is generated as an algebra by the functions 
\begin{gather} \label{eq:foncgen}
 ( a_1, \ldots, a_n ) \rightarrow \trace ( w (a) ) 
\end{gather}
where $w(a)$ runs over the words in the letters $a_1, a_1^*, \ldots,
a_n, a_n^*$. Since the elements of $G$ satisfy $a ^{-1} = a^*$,
this will end the proof.  

Let $W = \End (V)$. For any $n$-tuple $(d_1, \ldots, d_n)$ of non
negative integers, the space of polynomials on $W^{\oplus
  n}$ which are homogeneous of degree $d_i$ on the $i$-th variable is
isomorphic with $ \otimes_i \Sym ^{d_i} (W_i^*)$. So 
$$\C [ W^{\oplus n} ] = \bigoplus_{d \in \N^n } \bigotimes_{i=1,
  \ldots , n } \Sym^{d_i}( W^*) \subset   \bigoplus_{d \in \N^n } \bigotimes_{i=1,
  \ldots , n } ( W ^*)^{\otimes d_i} =:E $$
A right inverse of this inclusion is the following map from $E$ to $ \C [
W^{\oplus n} ]$ 
$$ \bigotimes_{i=1,
  \ldots , n } (\ell_i^1 \otimes \ldots \otimes \ell_i^{d_i})
\rightarrow  P (a_1, \ldots , a_n ) = \prod _{i=1, \ldots , n; \atop \; j=
  1, \ldots , d_i} \ell_i^{ j} ( a_i)  $$ 
Observe that the invariant subspace of $E$ is sent onto $ \C [ W ^{\oplus n}]^G$. Using
again the identification of $V$ with $V^*$, we have that $W \simeq V
\otimes V^* \simeq V^* \otimes V^*$.  By lemma \ref{lem:inv}, the
invariant subspace of $(V ^* )^{\otimes 2m} \simeq  (V^{\otimes 2m }
)^*$ is generated by the maps 
$$ x_1 \otimes \ldots \otimes x_{2m} \rightarrow \om ( x_{\si(1)}, x_{\si(2)})
  \ldots \om ( x_{\si(2m-1)}, x_{\si(2m)}) $$
where $\si$ runs over the permutation of $\{ 1, \ldots , 2 m\}$. We
deduce the invariants subspace of $E$ and then that the functions
(\ref{eq:foncgen}) generate $\C [ W^{\oplus n} ]^G$.
\end{proof} 

Denote by $\Mult_n $ the space of multilinear maps from $V^{\times
  n}$ to $\C$ and by $\Sym_n $ the subspace of symmetric maps.
For any non-negative integer $k \leqslant n$ with the same parity as
$n$, let $\Mult_{n,k}$ be the subspace of $\Mult_n$ generated by the maps
$$ P(x_{\si(1)}, \ldots, x_{\si (k)}) \om (x_{\si(k+1)},  x_{\si
    (k+2)}) \ldots \om ( x_{\si(n-1)}, x_{\si (n)})$$
where $\si$ ranges over the permutations of $\{ 1, \ldots , n\}$ and
$P$ over $\Sym_k$.
Recall that the spaces $\Sym_n $ are the irreducible representations of
$G$.

\begin{lem} \label{lem:inv}
The decomposition into isotypical subspaces of $\Mult_n $ is 
$$ \Mult_n = \bigoplus_{ 0 \leqslant k \leqslant n, \atop \; k = n \mod 2} \Mult_{n,k}$$
\end{lem}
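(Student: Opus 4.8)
The plan is to reduce the statement to the Clebsch--Gordan formula for $G=\Sl$ together with an explicit realisation of the isotypic components. Since $\om$ is a $G$-invariant non-degenerate alternating form, it identifies $V$ with $V^*$, and hence $\Mult_n \simeq (V^*)^{\otimes n} \simeq V^{\otimes n}$ as $G$-modules; recall that the $\Sym_k$ for $k\ge 0$ exhaust the irreducible representations of $G$, with $\dim \Sym_k = k+1$, and that $V^*\simeq \Sym_1$.

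First I would check that each $\Mult_{n,k}$ is isotypic of type $\Sym_k$. For a fixed permutation $\si$ and a fixed pairing of the last $n-k$ arguments by $\om$, the assignment
$$ P \longmapsto \Bigl( (x_1,\dots,x_n)\mapsto P(x_{\si(1)},\dots,x_{\si(k)})\,\om(x_{\si(k+1)},x_{\si(k+2)})\cdots\om(x_{\si(n-1)},x_{\si(n)})\Bigr) $$
is $G$-equivariant from $\Sym_k$ to $\Mult_n$, because $\om$ is invariant and $G$ acts diagonally. Its image is therefore either $0$ or a copy of the irreducible $\Sym_k$, so $\Mult_{n,k}$ is a sum of copies of $\Sym_k$. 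As the $\Sym_k$ are pairwise non-isomorphic and $V^{\otimes n}$ is completely reducible, the sum $\sum_k \Mult_{n,k}$ is automatically direct, and it remains only to prove that it exhausts $\Mult_n$.

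For this I would argue by induction on $n$, the cases $n\le 1$ being clear. Writing $\Mult_n = \Mult_{n-1}\otimes V^*$ with $V^*$ the dual of the last variable and using the induction hypothesis, it suffices to show $\Mult_{n-1,k'}\otimes V^*\subseteq \Mult_{n,k'+1}+\Mult_{n,k'-1}$ for each $k'$. Take a generator $g\in \Mult_{n-1,k'}$, given by a symmetric $P\in\Sym_{k'}$ on a set $S$ of $k'$ variables times $\om$-pairings on the remaining ones, and tensor it with $\lambda\in V^*\simeq\Sym_1$. I would then invoke the explicit Clebsch--Gordan splitting $\Sym_{k'}\otimes\Sym_1\simeq \Sym_{k'+1}\oplus\Sym_{k'-1}$: the product $P\cdot\lambda$, viewed as a map of the $k'+1$ variables $S\cup\{n\}$ symmetric in $S$, decomposes as its total symmetrisation over $S\cup\{n\}$, lying in $\Sym_{k'+1}$, plus a correction which, because the only $G$-invariant of $\wedge^2 V$ is $\om$, is a sum of terms $\om(x_i,x_n)$ with $i\in S$ times a symmetric map of degree $k'-1$ on the remaining variables. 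Carrying along the $\om$-pairings already present in $g$, the first piece lands in $\Mult_{n,k'+1}$ and the correction in $\Mult_{n,k'-1}$. Summing over the generators of $\Mult_{n-1}$ yields $\Mult_n=\sum_k\Mult_{n,k}$ and finishes the proof.

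The main obstacle I anticipate is making this last decomposition precise with the correct combinatorial coefficients, that is, writing down the exact identity expressing $P\cdot\lambda$ as a totally symmetric part plus $\om$-contractions, and handling the boundary value $k'=0$ where only the $\Sym_{k'+1}$ term survives. Everything else is a formal consequence of Schur's lemma and the invariance of $\om$.
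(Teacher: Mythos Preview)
Your proposal is correct and follows essentially the same induction as the paper: reduce to showing that a multilinear map symmetric in all but one variable decomposes as a totally symmetric part plus $\om$-contractions against a symmetric map of degree two less. The paper sidesteps the combinatorial obstacle you anticipate by observing that the map $(M,N)\mapsto M + \sum_i \om(x_i,x_n)\,N(\ldots,\widehat{x_i},\ldots)$ from $\Sym_{k'+1}\oplus\Sym_{k'-1}$ into the space of maps symmetric in the first $k'$ variables is $G$-equivariant, hence injective by Schur, and then surjective by a dimension count --- so no explicit coefficients are needed.
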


\begin{proof} 
$\Mult_{n,k}$ is clearly a subspace of the $k$-th isotypical component of
$\Mult_n $. Since $\Mult_n $ is the direct sum of its
isotypical components, we just have to show that the spaces $\Mult_{n,k}$
generates $\Mult_n$. This may be proved by induction over $n$ by
using that any multilinear map symmetric with respect to the
($n-1$) first arguments is of the form 
$$  L (x_1, \ldots , x_{n} ) = M(x_1, \ldots ,x_{n}) + \sum_{i=1}^n \om
  (x_i,x_n) N (x_1, \ldots , \widehat{x_i}, \ldots x_{n-1})$$
with $M \in \Sym _n $ and $N \in \Sym_{n-2} $. To show this last
fact, observe that the
map sending $(M,N)$ to $L$ is an isomorphism from $\Sym _n \oplus
\Sym_{n-2} $ onto the subspace of $\Mult_n  $
consisting of maps symmetric in the $(n-1)$ first arguments. Indeed,
this morphism is injective by $G$-equivariance and we conclude by counting dimensions. 
\end{proof}

\end{document}